\newtheorem{theorem}{Theorem}[section]
\newtheorem{lemma}[theorem]{Lemma}
\newtheorem{proposition}[theorem]{Proposition}
\newtheorem{corollary}{Corollary}
\theoremstyle{definition}
\theoremstyle{remark}
\newtheorem{remark}[theorem]{Remark}
\numberwithin{equation}{section}
\newcommand{\x}{{\mathbf{x}}}
\newcommand{\z}{{\mathbf{z}}}
\newcommand{\xx}{{\mathbf{y}}}
\newcommand{\dist}{{|\x-\xx|}}
\newcommand{\distn}{{|\x-\xx|^{n+2s}}}
\renewcommand{\ker}{\gamma_{\delta}}
\newcommand{\kernel}{\gamma}
\renewcommand{\S}{{S_{\delta}}}
\newcommand{\R}{\mathbb{R}}
\renewcommand{\L}{\mathcal{L}_{\delta}}
\newcommand{\LL}{A_{\varepsilon}}
\newcommand*{\LLe}[1]{{A_{#1}}}
\newcommand{\A}{{\A_{\delta}}}
\newcommand{\D}{\Omega}
\newcommand{\DI}{\Omega_{\delta}}
\newcommand{\DD}{{\D\cup\DI}}
\newcommand{\DL}{\widetilde{\Omega}}
\newcommand{\X}{X}
\newcommand{\V}{V}
\newcommand{\Vinf}{\V(\R^n)}
\newcommand{\Vd}{{\V^{\prime}}}
\newcommand{\VD}{\V} 
\newcommand{\VDd}{\VD^{\prime}}
\newcommand{\XD}{\X}
\newcommand{\HD}{H^s_{\D}(\R^n)}
\newcommand{\HDi}{H^{-s}_{\D}(\R^n)}
\newcommand{\vt}{\xi} 
\newcommand{\zz}{z} 
\renewcommand{\O}{\mathcal{O}}
\newcommand{\intd}{\int_{\DD}}
\newcommand{\N}{h}
\renewcommand{\d}{\mathop{}\!\mathrm{d}}
\newcommand*{\norm}[1]{{\|{#1}\|}}
\newcommand*{\norms}[1]{{\left|{#1}\right|}}
\newcommand*{\maxx}[1]{{({#1})^+}}
\newcommand*{\normc}[1]{\norm{#1}_{\V}}
\newcommand*{\dual}[1]{{\langle{#1}\rangle}}
\newcommand*{\duall}[1]{\langle{#1}\rangle}
\newcommand{\spann}{\mathrm{span}}
\newcommand{\supp}{\mathrm{supp}}
\newcommand{\diam}{\mathrm{diam}}
\newcommand*{\rr}[2]{{#1}e--{0#2}}
\begin{document}

\title[Nonlocal variational (in-)equalities]{Regularity and approximation analyses of nonlocal variational equality and inequality problems}

\author{Olena Burkovska and Max Gunzburger}
\address[Olena Burkovska and Max Gunzburger]{Department of Scientific Computing, Florida State University, 400 
Dirac Science Library, Tallahassee, FL 32306-4120, USA}

\email[O. Burkovska]{oburkovska@fsu.edu}
\email[M. Gunzburger]{mgunzburg@fsu.edu}

\thanks{Supported by the US Air Force Office of Scientific Research grant FA9550-15-1-0001.}
\subjclass[2010]{34B10, 35B65, 35J86, 49J40, 65N30, 65N15}

\begin{abstract}
We consider linear and obstacle problems driven by a nonlocal integral operator, for which nonlocal interactions are restricted to a ball of finite radius. These 
type of operators are used to model anomalous diffusion and, for a special choice of the integral kernels, reduce to the fractional Laplace operator on a bounded domain. By means of a nonlocal vector calculus we recast the problems in a weak form, leading to corresponding nonlocal variational equality and inequality problems. We prove optimal regularity results for both problems, including a higher regularity of the solution and the Lagrange multiplier. Based on the regularity results, we analyze the convergence of finite element approximations for a linear problem and illustrate the theoretical findings by numerical results.
\end{abstract}
\keywords{Nonlocal diffusion, nonlocal operator, fractional Laplacian, 
variational inequalities, regularity of the solution, finite elements}

\maketitle
\section{Introduction}

We consider the analysis and approximation of a linear diffusion problem and a related obstacle problem involving the nonlocal operator
\begin{equation}
-\L 
u(\x):=2\int_{B_{\delta}(\x)}(u(\xx)-u(\x))\kernel(\x,\xx)\d\xx,\quad\x\in\D,
\label{nonl_operator0}
\end{equation}
where the kernel  $\kernel(\x,\xx):\R^n\times\R^n\to\R$ is a non-negative symmetric function, $\D\subset\R^n$ is a bounded domain, and $B_{\delta}(\x)$ is a ball in $\R^n$ of a radius $\delta>0$ centered at $\x$. The operator $\L$ is nonlocal because the value of the function $u$ at a point $\x$ is defined by the contributions of $u$ at other points $\xx$ separated from $\x$ by a finite distance. The parameter $\delta$ determines the extent of the nonlocal interactions.

If nonlocal interactions become infinite ($\delta\to\infty$) and the kernel $\kernel=1/\norms{\x-\xx}^{n+2s}$ up to some scaling factor, the nonlocal operator $-\L$ reduces to the fractional Laplace operator $(-\Delta)^s$, $s\in(0,1)$, on a bounded domain; see~\cite{DELIAlaplacian}. In this case, the kernel $\kernel$ is singular and the integral~\eqref{nonl_operator0} is understood in the principal value sense. From the probabilistic point of view, the nonlocal diffusion operator~\eqref{nonl_operator0} is associated with a L\'evy jump-diffusion processes with $\delta$ related to the maximum length of possible jumps.

In addition to the fractional Laplace kernel, other kernels have been actively exploited in different applications, e.g., 
peridynamics~\cite{silling2000}, machine learning~\cite{rosasco2010}, and image analysis~\cite{buades2010,gilboa2007}. Furthermore, an extension 
to nonlocal convection diffusion models have also been 
investigated~\cite{conv_diff2017,tian2015}. 

Nonlocal diffusion operators also arise in mathematical finance applications such as option pricing; see, e.g.,~\cite{cont2004}. In this context, a nonlocal diffusion operator is used to model the behaviour of the log-asset price that is not exclusively driven by a Brownian motion, but instead follows a jump-diffusion process, e.g., the Merton model~\cite{merton1976}. 

The (non-)local obstacle problem is often associated with the optimal stopping time problem in stochastic control, see, e.g.,~\cite{bensoussan}, that, in the context of option pricing, is related to the pricing of American put options.

In this paper, we analyze the stationary linear and obstacle problem associated with the nonlocal operator $-\L$ and also analyze finite element discretizations of these problems. A main contribution of this work is to derive improved Sobolev regularity for the solution of both problems. Such results are necessary, e.g., for the error analysis of finite element approximation.

We first consider, for a given function $f$, the following linear nonlocal problem with homogeneous volume constraints:
\begin{subequations}
 \begin{align}
 -\L u(\x)&=f(\x), &&\mbox{for } \x\in\D,\\
 u(\x)&=0, &&\mbox{for } \x\in\DI,\label{nonl_linear_constr}
\end{align}\label{nonl_linear}\end{subequations}
where $\DI\subset\R^n\setminus\D$ is the $\delta$-neighborhood of $\D$. Volume constraints such as \eqref{nonl_linear_constr} (that is the nonlocal analog of the Dirichlet boundary condition imposed on $\partial\D$ in the local partial differential equation case) are essential for the well posedness of the nonlocal problem. The variational analysis and finite element approximation of the problem are possible due to the recently developed nonlocal vector 
calculus~\cite{Dunonlocal2012,Dunonlocal2013}.

For the problem \eqref{nonl_linear}, we restrict attention to the kernel related to the fractional Laplacian. The Sobolev regularity of the fractional Laplace problem on bounded smooth domains in terms of the Sobolev regularity of the data is considered in \cite{grubb2015} using H\"ormander's theory for pseudo-differential operators. On less smooth domains and under stronger regularity assumption on the data, similar regularity estimates in (weighted) Sobolev spaces have been established in~\cite{acosta2017} by building upon H\"older regularity results for the fractional Laplace problem~\cite{ros_oton2014}. However, the corresponding results for nonlocal operators~\eqref{nonl_operator0} related to the \emph{truncated} fractional Laplacian are not available in the literature although they are necessary for the finite element analysis; see, e.g.,~\cite{Dunonlocal2012}. Our aim is to fill this gap and derive the corresponding regularity estimates for truncated kernels. We do so by relying upon results from~\cite{grubb2015} and by estimating the error terms arising from truncation; see Theorem~\ref{thm:regularity_linear}. We also develop the discretization of the problem by linear finite elements and derive the corresponding a priori error estimates.

In the second part of the paper, we consider the obstacle problem in which, in contrast to the linear model \eqref{nonl_linear}, the solution $u(\x)$ is additionally constrained by the given obstacle functional $\psi(\x)$ and solves the following set of inequalities:
\begin{subequations}
 \begin{align}
-\L u(\x)&\geq f(\x),
&&\mbox{for } \x\in\D,\\
u(\x)&\geq\psi(\x),  &&\mbox{for } \x\in\D,\\
\left(-\L u(\x)-f(\x)\right)\left(u(\x)-\psi(\x)\right)&=0, &&\mbox{for } \x\in 
\D,\\
u(\x)&=0, &&\mbox{for } \x\in\DI.
\end{align}\label{nonl_obstacle}\end{subequations}
The well-posedness of this problem and the convergence of the finite element approximation have been established in~\cite{guan2017} for the fractional Laplace and integrable kernels. We analyze this problem in a variational framework, considering instead of the primal variational inequality an equivalent variational saddle-point formulation. An additional Lagrange multiplier variable is introduced that physically can be interpreted as a contact force on the obstacle. The well posedness of the nonlocal (and local) problem require only low regularity assumptions on the Lagrange multiplier. In the local case, it is well known that for sufficiently regular right-hand sides and obstacle function, the regularity of the Lagrange multiplier can be improved together with the primal solution. {However, for the kernels under consideration in this work, similar results are not available}. In this paper, for a general class of kernels, we are able to derive improved regularity results for the Lagrange multiplier under certain regularity assumptions on the data; see Theorem~\ref{thm:regularity_lagrange}. For the proof, we follow the penalty approach developed for the analysis of the local variational inequalities~\cite{kinderlehrer80}, and derive the analogue of the the Lewy-Stampaccia dual estimates~\cite{lewy_1969} for the nonlocal case. Combining these with the new regularity estimates we derive for the problem \eqref{nonl_linear}, we are also able to show an improved regularity of the primal solution for the truncated fractional Laplace kernel. In addition, for this type of the kernel, we show the convergence of the nonlocal solution to the solution of the corresponding variational inequality for the fractional Laplace operator. 

We comment on other works related to the regularity of the obstacle problem for the fractional Laplacian. Similar Lewy-Stampaccia type estimates are obtained in~\cite{servadei} in the context of an abstract framework of (non-)local operators. However, the case of the truncated kernel is not covered by their analysis and higher regulary data is assumed together with an additional restriction on the power of the fractional Laplacian. The regularity of the obstacle problem measured in H\"older and Lipschitz spaces is extensively studied in~\cite{silvestre2007,caffarelli2017} by means of the equivalent representation of the problem in $n+1$ dimension using the Dirichlet-to-Neumann map~\cite{caffarelli2007}.  

The rest of the paper is structured as follows. In Section~\ref{sec:prelim}, we introduce the necessary function spaces and recall some preliminary results, which are used in the rest of the paper. Section~\ref{sec:ve} covers the analysis for the linear problem \eqref{nonl_linear} and the corresponding regularity results are derived in Section~\ref{subsec:ve_reg}. In Section~\ref{subsec:disc_linear}, we present a finite element discretization for the linear problem and derive the associated a priori error estimates. The nonlocal variational inequality formulation of problem \eqref{nonl_obstacle} is considered in~ Section~\ref{sec:vi}. For the fractional Laplace kernel, we show, in Section~\ref{subsec:vi_fl}, the convergence of the nonlocal solution to the fractional Laplacian solution. In Section~\ref{subsec:vi_reg}, we derive the improved regularity for the Lagrange multiplier and the corresponding primal solution. We discuss the finite element discretization of the nonlocal variational inequality in Section~\ref{subsec:disc_vi}. Numerical results that illustrate our theoretical findings are given in Section~\ref{sec:numerics} for one-dimensional linear and obstacle problems.

\section{Preliminaries}\label{sec:prelim}
Let $\D\subset\mathbb{R}^n$ be a bounded domain with a Lipschitz boundary. We introduce a truncated kernel $\ker:\R^n\times\R^n\to\R$ that is a non-negative symmetric function and that, and for all $\x\in\R^n$, satisfies the following conditions:
\begin{equation}
 \begin{cases}
   \ker(\x,\xx)\geq 0 &\forall\xx\in B_{\delta}(\x)\\
   \ker(\x,\xx)\geq\gamma_0>0 &\forall\xx\in B_{\delta/2}(\x)\\
   \ker(\x,\xx)=0 &\forall\xx\in\R^n\setminus B_{\delta}(\x),
  \end{cases}\label{kernel_prop1}
\end{equation}
where $\gamma_0$ is a positive constant and $B_{\delta}(\x)$ is
the ball of radius $\delta$ centered at $\x$:
\begin{equation*}
 B_{\delta}(\x):=\{\xx\in\R^n\colon\dist\leq\delta\}.
\end{equation*}
These conditions imply that the nonlocal interactions are limited to a ball of a radius $\delta$, referred as the interaction radius. For $\delta>0$, we define an interaction domain $\DI\subset\mathbb{R}^d\setminus\D$ corresponding to $\D$ as follows:
\[
\DI:=\{\xx\in\mathbb{R}^d\setminus\D\colon \ker(\x,\xx)\neq 0,\ \ \x\in\D 
\}.
\] 
In terms of these notations, we can express the operator~\eqref{nonl_operator0} as 
\begin{equation}
-\L 
u(\x):=2\int_{\DD}(u(\xx)-u(\x))\ker(\x,\xx)\d\xx,\quad\x\in\D.
\label{nonl_operator}
\end{equation}

Because the kernel $\ker(\x,\xx)$ determines the qualitative nature of the solution problem, the model formulations~\eqref{nonl_linear} and \eqref{nonl_obstacle} can  be used to cover a large class of problems. We analyze a several examples for the kernels. 

\subsection{Kernels}

Although we often focus on the kernels $\ker$ defined in Case 1 below that are related to the fractional Laplace operator, we also consider other possible choices for $\ker$.

{\it{Case 1.}} ({\em Fractional Laplacian-type kernels.}) For $s\in(0,1)$, $\delta>0$, and some positive constants $\gamma_{1},\gamma_2>0$, consider kernels which are proportional to $1/\distn$, namely, for all 
$\x\in\DD$
\begin{equation}
 \frac{\gamma_{1}}{\distn}\leq\ker(\x,\xx)\leq\frac{\gamma_2}
{ \distn } \quad\mbox{for }\xx\in B_{\delta}(\x).\label{kernel_prop2}
\end{equation}
As an example, consider the kernel
\begin{equation}
 \ker(\x,\xx)=\begin{dcases}
 \frac{\sigma(\x,\xx)}{\distn}, &{\xx\in B_{\delta}(\x)},\\
 0, &\text{otherwise}
 \end{dcases}\label{kernel_case1}
\end{equation}
with $\sigma(\x,\xx)$ a symmetric function that is bounded from below and above by positive constants. For $\delta=+\infty$, as a particular instance of~\eqref{kernel_case1}, we consider the kernel corresponding to the fractional Laplace operator~\eqref{FL_operator}:
\begin{equation}
\kernel_{\infty}(\x,\xx)=\frac{c_{n,s}}{2|\x-\xx|^{n+2s}},\quad c_{n,s}=\frac{2^{2s}s\Gamma(s+\frac{n}{2})}{\pi^{n/2}\Gamma(1-s)},
\label{eq:kernel_fractional} 
\end{equation}
where
$\Gamma(\cdot)$ denotes the Gamma function. Then, the operator $-\L$ corresponds to the integral definition of the fractional Laplace operator
\begin{equation}
 (-\Delta)^s 
u(\x):=c_{n,s}\int_{\R^n}\frac{u(\x)-u(\xx)}{\dist^{n+2s}}\d\xx,\quad 
0<s<1.\label{FL_operator}
\end{equation}
The fractional Laplace Poisson problem on bounded domains with homogeneous volume constraints takes the form
\begin{subequations}
 \begin{align}
 (-\Delta)^su(\x)&=f(\x)&&\mbox{for }\x\in\Omega,\\
 u(\x)&=0&&\mbox{for }\x\in\R^{n}\setminus\Omega.
\end{align}\label{FL_linear}\end{subequations}
In~\cite{DELIAlaplacian} the convergence of the nonlocal solution of~\eqref{nonl_linear} to the solution of the fractional Laplacian problem~\eqref{FL_linear} is shown.

{\it Case 2.} ({\em Square-integrable kernels.}) There exist constants $\gamma_3$, $\gamma_4>0$ such that
\begin{subequations}
\begin{align}
\gamma_3\leq\int_{(\DD)\cap 
B_{\delta}(\x)}\ker(\x,\xx)\d\xx,\quad
\int_{\DD}\ker^2(\x,\xx)\d\xx\leq\gamma_4^2, \quad\forall\x\in\D.
\end{align}\label{kernel_case2} 
\end{subequations}

{\it Case 3.} ({\em Peridynamic-type kernel.}) 
There exist constants $\gamma_5$, $\gamma_6>0$ such that
\begin{equation}
 \frac{\gamma_{5}}{\dist}\leq\ker(\x,\xx)\leq\frac{\gamma_6}
{ \dist} \quad\mbox{for }\xx\in B_{\delta}(\x).\label{kernel_prop3}
\end{equation}
A an example, consider the kernel 
\begin{equation}
 \ker(\x,\xx)=\begin{dcases}
 \frac{\xi(\x,\xx)}{\dist}, &{\xx\in B_{\delta}(\x)},\\
 0, &\text{otherwise}
 \end{dcases}\label{kernel_case3}
\end{equation}
with $\xi(\x,\xx)$ a symmetric function bounded from below and above by positive constants. The kernel~\eqref{kernel_case3} is integrable for $n>1$ and square-integrable for $n>2$ so that in the latter case, the kernel \eqref{kernel_prop3} is an example of the Case 2 type kernels.

Note that peridynamics is a nonlocal continuum model for solid mechanics featuring kernels with this type singularity, but, of course, with vector-valued displacement functions to be solved for. However, the scalar case we consider here and the vector case are entirely similar with respect to the regularity and other features of their solutions. 

\subsection{Nonlocal function spaces}
We introduce the function spaces used in this work and review some 
of the important properties of these spaces. 

For a general open set $\DL\subset\R^n$, we denote by $L^2(\DL)$ the standard space of square integrable functions on $\DL$. The fractional Sobolev space for $s\in(0,1)$ is defined as
\begin{equation*}
 H^s(\DL):=\{v\in L^2(\DL)\colon |v|_{H^s(\DL)}<\infty\}
\end{equation*}
with Gagliardo seminorm
\begin{equation*} 
|v|_{H^s(\DL)}^2:=\int_{\DL}\int_{\DL}\frac{|v(\x)-v(\xx)|^2}{\distn}
\d\xx\d\x.
\end{equation*}
The space $H^s(\DL)$ is a Hilbert space that is endowed with the norm 
\begin{equation*}
 \norm{v}_{H^s(\DL)}=\norm{v}_{L^2(\DL)}+\norms{v}_{H^s(\DL)}.
\end{equation*}

For $\sigma>1$ not an integer, we define $H^\sigma(\DL)$, $\sigma=m+s$, $m\in\mathbb{N}$, $s\in (0,1)$, as
\begin{equation*}
 H^\sigma(\DL):=\{v\in H^m(\DL)\colon |D^\alpha v|\in{H^s(\DL)},\;\mbox{for } 
|\alpha|=m\},
\end{equation*}
which is equipped with the norm
\begin{equation*} 
\norm{v}_{H^\sigma(\DL)}=\norm{v}_{H^m(\DL)}+\sum_{|\alpha|=m}\norms{D^\alpha v}_{
H^s(\DL) } .
\end{equation*}
Additionally, for $\DL$ such that $\overline{\D}\subset\DL$ and {$s>0$}, we define the space incorporating the volume constraints given by
\begin{equation*}
 H_{\D}^s(\DL):=\{v\in H^s(\DL)\colon v=0\;\mbox{on }\ \DL\setminus\D\},
\end{equation*}
that is endowed with the norm of $H^s(\overline{\D})$, i.e., $\norm{v}_{H_{\D}^s(\DL)}=\norm{v}_{H^s(\DL)}$.

We define the restriction operator $r_{\DL}:H^s_\D(\R^n)\to H_\D^s(\DL)$ by $r_{\DL}u=u\big|_{\DL}$ and the extension operator $e_{\DL}:H^s_\D(\DL)\to H^s_\D(\R^n)$ by
\begin{equation*}
e_{\DL}u=\begin{cases}
          u\;&\mbox{on }\DL,\\
          0\;&\mbox{on }\R^n\setminus\DL,
         \end{cases}
\end{equation*}
i.e., continuation of $u$ by zero outside~$\DL$.
The restriction and extension operators are linear continuous mappings, and {$H^s_\D(\DL)$ and $H^s_\D(\R^n)$ are isomorphic}.

For variational analyses of the problems~\eqref{nonl_linear} and \eqref{nonl_obstacle}, we define the bilinear form for $\ker$ satisfying~\eqref{kernel_prop1} and defined in one of 
{Case 1}, {Case 2}, or {Case~3} as
\begin{align}
a(u,v):=
\intd\intd\left(u(\x)-u(\xx)\right)\left(v(\x)-v(\xx)\right)\ker(\x , 
\xx)\d\xx\d\x.\label{bil_form}
\end{align}
The associated energy and constrained energy spaces are defined as 
\begin{align*}
\XD:=\{v\in L^2(\DD)\colon\; {a}(u,v)<\infty\}
\quad \mbox{and}\quad
\VD:=\{v\in \XD\colon\;  
v=0 \;\;\text{ a.e. on }\DI\}.
\end{align*}
Additionally, we have the constrained $L^2$-space
$$
L_{\D}^2(\DD):=\{v\in L^2(\DD)\colon\;  v=0 \;\;\text{ a.e. on }\DI\},
$$
{which is isometrically isomorphic to $L^2(\D)$}. For {Case~1} and $s\in (0,1)$, the nonlocal space $\XD$ is equivalent to the fractional Sobolev space $H^s(\DD)$ and $\V$ is equivalent to $H^s_\D(\DD)$. For {Case 2} and {Case 3} ($n>2$) the space $\X$ is equivalent to $L^2(\DD)$ and $V$ to $L_{\D}^2(\DD)$; see~\cite{Dunonlocal2012}. {For {Case~3}  ($n=1$), the energy space $\V$ is not equivalent to any Sobolev space, however it is a separable Hilbert space and is a strict subspace of $L^2(\DD)$.} 

Hence, $\X$ and $\VD$ are Hilbert spaces equipped with the inner product and norms
$$(u,v)_{\VD}:=a(u,v),\quad\quad\norm{v}^2_{\VD}:=a(v,v),\quad 
\norm{v}^2_{\X}=\norm{v}^2_{L^2(\DD)}+\norm{v}^2_\V.$$ 
For {Case~1} and some positive constants $C_1$ and $C_2$ and $s\in(0,1)$, we have the norm equivalence \cite{Dunonlocal2012}:
\begin{equation}
 C_1\norm{v}_{H^s(\DD)}\leq\norm{v}_{\VD}\leq 
C_2\norm{v}_{H^s(\DD)}\quad\quad\forall v\in\VD. \label{spaces_equiv}
\end{equation}
Moreover, for {Case 1} the space $V$ is equivalent to $H^s_\D(\R^n)$, $s\in(0,1)$, and that the following norm equivalence
\begin{equation}
C_3\norm{v}_{H_\D^s(\R^n)}\leq\norm{v}_{\VD}\leq 
C_4\norm{v}_{H_\D^s(\R^n)}\quad\forall v\in\VD\label{spaces_equiv_Rn}
\end{equation}
holds. This also implies that by means of the extension and restriction operators, we can always extend $u\in\V$ to $\widetilde{u}:=e_{\DD}u\in H^s_{\D}(\R^n)$, and vise-versa, for any $u\in H_\D^s(\R^n)$ we can restrict it to $\widetilde{u}:=r_{\DD}u\in\V$. Therefore, by an abuse of notation, we often omit the notation of these operators, and simply write $u$ if it is clear from the context.

We state another important result from~\cite{Dunonlocal2012} for the nonlocal space $\VD$.

\begin{lemma}[Nonlocal Poincar\'e inequality]
Let a kernel $\ker$ satisfy~\eqref{kernel_prop1} and defined in either {Case 1}, {Case 2} or {Case 3}. 
Then there exist a  constant $C_P>0$ such that the following holds
\begin{equation}
 \norm{v}_{L^2(\DD)}\leq C_P\norm{v}_{\VD}\quad\forall 
v\in\VD.\label{poincare}
\end{equation}\end{lemma}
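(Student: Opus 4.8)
The plan is to establish the nonlocal Poincaré inequality \eqref{poincare} by a contradiction/compactness argument, treating the three kernel cases in parallel wherever possible. First I would observe that in Case~2 and Case~3 with $n>2$, the claim is almost trivial: there $\V$ is equivalent to $L^2_\D(\DD)$ and the lower bound $\gamma_3 \le \int_{(\DD)\cap B_\delta(\x)}\ker(\x,\xx)\d\xx$ gives, after expanding $a(v,v)$ and using $v=0$ a.e. on $\DI$, a direct estimate of $\norm{v}^2_{L^2(\DD)}$ by $a(v,v)$; I would also record the Case~3, $n=1$ situation, where $\V$ is a strict subspace of $L^2(\DD)$ but the same lower-bound computation still applies because $\ker$ is bounded below on $B_{\delta/2}(\x)$ by $\gamma_0$ (from \eqref{kernel_prop1}), which yields $\int_{B_{\delta/2}(\x)\cap(\DD)}\ker \ge \gamma_0\,|B_{\delta/2}\cap(\DD)|$, a positive quantity uniform in $\x\in\D$ since $\D$ is bounded. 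So the substantive case is Case~1.

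For Case~1, the plan is to use the norm equivalence \eqref{spaces_equiv} together with a standard compactness argument in $H^s_\D(\DD)$. Suppose \eqref{poincare} fails; then there is a sequence $v_k\in\V$ with $\norm{v_k}_{L^2(\DD)}=1$ and $\norm{v_k}_{\V}\to 0$. By \eqref{spaces_equiv} the sequence is bounded in $H^s(\DD)$, hence (Rellich–Kondrachov for fractional Sobolev spaces on the bounded Lipschitz domain $\DD$) has a subsequence converging strongly in $L^2(\DD)$ to some $v$ with $\norm{v}_{L^2(\DD)}=1$. Lower semicontinuity of the seminorm gives $|v|_{H^s(\DD)}=0$, so $v$ is (a.e.) constant on $\DD$; but $v=0$ a.e. on $\DI$ and $|\DI|>0$, forcing $v\equiv 0$, contradicting $\norm{v}_{L^2(\DD)}=1$. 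This is the cleanest route and it reuses exactly the space-equivalence facts already quoted from~\cite{Dunonlocal2012}.

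The main obstacle is making sure the compactness step is legitimately available for the fractional-order space $H^s(\DD)$ on a general bounded Lipschitz domain: one needs the compact embedding $H^s(\DD)\hookrightarrow\hookrightarrow L^2(\DD)$ for $s\in(0,1)$, which holds for bounded Lipschitz domains, and one needs $\DD$ itself to be such a domain (it is, since $\D$ is bounded Lipschitz and $\DI$ is its $\delta$-neighborhood). A secondary point is the characterization that a function with vanishing Gagliardo seminorm on a connected open set is constant; if $\DD$ is not connected one instead argues componentwise, noting every component of $\DD$ meets $\DI$ (because $\DI$ surrounds $\D$), so the constant is still forced to be zero on each component.

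An alternative, fully constructive proof avoiding compactness — which I would mention as a remark — is to bound $\norm{v}_{L^2(\DD)}^2$ directly: for $\x\in\D$ and $\xx\in B_{\delta/2}(\x)\cap\DI$ one has $v(\xx)=0$, so $|v(\x)|^2 = |v(\x)-v(\xx)|^2$, and integrating in $\xx$ over such a set and using $\ker(\x,\xx)\ge\gamma_1/\distn \ge \gamma_1/\delta^{n+2s}$ on $B_\delta(\x)$ gives control of $|v(\x)|^2$ by $a(v,v)$ for $\x$ within distance $\delta/2$ of $\DI$; a covering/chaining argument then propagates the bound to all of $\D$. This is messier but self-contained; I expect the compactness argument to be the one presented, with the chaining argument's difficulty being the uniform geometry of the covering — hence I would default to the compactness proof and cite \eqref{spaces_equiv} and the Rellich embedding.
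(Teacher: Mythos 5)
The paper itself gives no proof of this lemma; it is quoted directly from \cite{Dunonlocal2012}, so there is nothing in-paper to compare against and your proposal must stand on its own. As written it has a genuine gap in Cases~2 and~3: expanding $a(v,v)=2\int_{\DD}v(\x)^2\bigl(\int_{\DD}\ker(\x,\xx)\d\xx\bigr)\d\x-2\int_{\DD}\int_{\DD}v(\x)v(\xx)\ker(\x,\xx)\d\xx\d\x$ and invoking $\gamma_3\le\int_{(\DD)\cap B_\delta(\x)}\ker(\x,\xx)\d\xx$ only controls the first term; the cross term is of exactly the same size and cancels it for functions that are nearly constant on $\D$, so no ``direct estimate'' of $\norm{v}^2_{L^2(\DD)}$ by $a(v,v)$ falls out of the expansion alone. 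The coercivity must be extracted from the volume constraint, e.g.\ by the strip/chaining argument you relegate to a closing remark (using $\ker\ge\gamma_0$ on $B_{\delta/2}(\x)$ and marching inward from $\DI$ in layers of width $\delta/2$), or, in Case~2, by a compact-operator (Hilbert--Schmidt) argument exploiting $\int_{\DD}\ker^2\le\gamma_4^2$; a Rellich-type compactness route is unavailable there because the energy space is only $L^2$-equivalent. Likewise, appealing to the stated equivalence $\V\simeq L^2_\D(\DD)$ is circular: the nontrivial half of that equivalence \emph{is} \eqref{poincare}.

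For Case~1 there is a related circularity/repair issue. If you use \eqref{spaces_equiv} literally (full $H^s(\DD)$ norm on the left), then $\norm{v}_{L^2(\DD)}\le\norm{v}_{H^s(\DD)}\le C_1^{-1}\norm{v}_\V$ and the lemma is a one-line corollary with no compactness needed --- but that left inequality already contains the Poincar\'e bound, so you are assuming the conclusion unless \eqref{spaces_equiv} is granted as an independent external fact. If instead you use only what the kernel bound \eqref{kernel_prop2} gives, namely control of the Gagliardo integrand over pairs with $\dist\le\delta$, two repairs are needed: (i) boundedness of the full seminorm of $v_k$ requires adding the far-field pairs, estimated by $C\,\delta^{-(n+2s)}\norm{v_k}^2_{L^2(\DD)}$; and (ii) you cannot conclude $|v|_{H^s(\DD)}=0$ by lower semicontinuity, since $|v_k|_{H^s(\DD)}$ is then only bounded, not vanishing --- instead pass to an a.e.-convergent subsequence, use Fatou to get $a(v,v)\le\liminf a(v_k,v_k)=0$, and then use $\ker\ge\gamma_0$ on $B_{\delta/2}(\x)$ together with a chaining/connectedness argument to conclude that $v$ is a.e.\ constant, hence zero because $v=0$ on $\DI$. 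With these modifications your compactness proof of Case~1 is sound; as stated, it either begs the question or breaks at the lower-semicontinuity step, and Cases~2 and~3 are not actually proved.
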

We denote by $\VDd$ the dual space of $\VD$, and by 
$\dual{\cdot,\cdot}$ the extended {$L_\D^2(\DD)$} duality pairing between these spaces. For 
any 
$f\in\VDd$ we define the dual norm as 
\begin{equation*}
\norm{f}_{\VDd}:=\sup_{\substack{v\in\VD\\ v\neq 
0}}\frac{\int_{\D}f \ v\d \x}{\norm{v}_{\VD}}.
\end{equation*}
{In a similar way, we define $H^{-s}(\DL)=(H^s_\D(\DL))^{\prime}$, where, $\DL$ is such that $\overline{\D}\subset\DL$.}

\section{Linear nonlocal problem} \label{sec:ve}
Using the nonlocal Green's first identity~\cite{Dunonlocal2012}, we obtain the following weak formulation of the linear nonlocal problem~\eqref{nonl_linear}: For a given $f\in\VDd$, find $u\in\VD$ such that 
\begin{equation}
a(u,v)=\dual{f,v}\quad\forall v\in\VD.
 \label{nonl_linear_var}
\end{equation}
By the Lax-Milgram theorem, the problem~\eqref{nonl_linear_var} admits a unique solution. Moreover, there exist a constant $C>0$ such that solution satisfies
\begin{equation}
 \norm{u}_{\VD}\leq C\norm{f}_{\VDd}.\label{bound_f}
\end{equation}
Taking into account the equivalence of the nonlocal energy and fractional Sobolev spaces~\eqref{spaces_equiv}, the estimate~\eqref{bound_f} {for Case~1} implies that for any {$f\in H_\D^{-s}(\DD)$,}
\begin{equation}
 \norm{u}_{H_\D^s(\DD)}\leq C\norm{f}_{H_\D^{-s}(\DD)}\quad\mbox{for 
} 0<s<1.\label{bound_fs}
\end{equation}
For all $u, v\in\VD$ and $\delta>0$, we introduce the linear bounded operator $A_{\delta}:\VD\to\VDd$ associated with the bilinear form $a(\cdot,\cdot)$ given by 
\begin{equation}
\dual{A_{\delta}u,v}=a(u,v)\quad \forall 
u,v\in\V. \label{eq:operator_A}
\end{equation}

{For some cases}, we can consider the special case $\delta=+\infty$; the corresponding integral kernels are denoted by $\kernel_{\infty}$. Then, $\DI=\R^n\setminus\D$,
$\V=\Vinf$, and we introduce the bilinear form $a_\infty:\V\times\V\to\R$ given by
\begin{equation}
a_{\infty}(u,v)
=\int_{\R^n}\int_{\R^n}
\left(u(\x)-u(\xx)\right)\left(v(\x)-v(\xx)\right)\kernel_{\infty}(\x , 
\xx)\d\xx\d\x \quad\forall v\in\V,\label{eq:bil_a_inf}
\end{equation}
and the associated operator $A_{\infty}:\V\to\V^{\prime}$ defined as 
$\dual{A_{\infty}u,v}=a_{\infty}(u,v)$. {In order to ensure that~\eqref{eq:bil_a_inf} is well-defined, we restrict attention to {Case~1}, and {Case~2}, under an additional inegrability assumption on the kernel $\kernel_{\infty}$, specifically radial kernels with $\kernel_{\infty}(|\cdot|)\in L^1(\R^n\setminus B_\delta(0))$ for some $\delta>0$.} 

The following statement establishes a useful relation between the nonlocal operators $A_{\delta}$ and $A_{\infty}$ corresponding to kernels $\kernel_{\infty}$.

\begin{proposition}\label{prop:LP_nonlocal}
For $\delta>0$, assume the function $\ker$ is radial, i.e., $\ker(\x,\xx)=\ker(|\x-\xx|)$, and satisfies~\eqref{kernel_prop1} and, furthermore, we assume that $\kernel_{\infty}(
|\cdot|)\in L^1(\R^n\setminus B_\delta(0))$ for some $\delta>0$. Then, for $u\in\V$ and $\delta>\diam|\D|$ we have that
\begin{equation}
 {\dual{A_{\infty}u,v}=\dual{A_{\delta}u,v}+C(\delta,n)(u,v)_{L^2(\D)} ,}
\quad\forall v\in\V,\label{FL_nonl_relation}
\end{equation}
where $C(\delta,n)=2\int_{\R^n\setminus B_{\delta}(0)}\kernel_{\infty}(|\z|)\d\z.$
In addition, for {Case 1} with $\sigma(\x,\xx)=c_{n,s}/2$, that corresponds to the fractional Laplace kernel, $C(\delta,n)$ can be computed as
\begin{equation}
 C(\delta,n)=\frac{c_{n,s}\pi^{n/2}}{\Gamma({n}/{2})\delta^{2s}s}. 
\label{eq:equivalence_constant1}
\end{equation}
\end{proposition}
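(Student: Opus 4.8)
The plan is to use that every $u\in\V$ extends by zero to an element of $\HD$, hence vanishes a.e.\ on $\R^n\setminus\D$, and that the hypothesis $\delta>\diam\D$ confines the effect of truncating $\kernel_{\infty}$ to a zeroth-order term. Throughout, $\ker$ is understood to be the truncation of $\kernel_{\infty}$, i.e.\ $\ker(\x,\xx)=\kernel_{\infty}(\dist)$ for $\dist\le\delta$ and $\ker(\x,\xx)=0$ otherwise, so that $\kernel_{\infty}-\ker=\kernel_{\infty}\,\mathbf{1}_{\{\dist>\delta\}}$.

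First I would rewrite $a(u,v)=\dual{A_{\delta}u,v}$ as an integral over all of $\R^n\times\R^n$. Outside $\DD\times\DD$ at least one of $\x,\xx$ lies in $\R^n\setminus\D$, where $u$ and $v$ vanish; using the symmetry of $\ker$ together with the definition of $\DI$ (so that $\ker(\x,\xx)\neq 0$ and $\xx\in\D$ force $\x\in\DD$), the integrand $(u(\x)-u(\xx))(v(\x)-v(\xx))\ker(\x,\xx)$ vanishes a.e.\ there, whence $\dual{A_{\delta}u,v}=\int_{\R^n}\int_{\R^n}(u(\x)-u(\xx))(v(\x)-v(\xx))\ker(\x,\xx)\d\xx\d\x$. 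For $u,v\in\V$ both this integral and $a_{\infty}(u,v)$ are absolutely convergent — this is precisely the setting in which $a_{\infty}$ was declared well-defined, together with the equivalence $\V\cong\HD$ in Case~1 — so they may be subtracted under the integral sign, and the diagonal singularity cancels, leaving
\[
\dual{A_{\infty}u,v}-\dual{A_{\delta}u,v}=\int_{\R^n}\int_{\{\dist>\delta\}}\bigl(u(\x)-u(\xx)\bigr)\bigl(v(\x)-v(\xx)\bigr)\kernel_{\infty}(\dist)\d\xx\d\x .
\]

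Next I would observe that on $\{\dist>\delta\}$ the points $\x,\xx$ cannot both lie in $\overline\D$, since $\delta>\diam\D$; hence for $\x\in\D$ we automatically have $\xx\notin\overline\D$, so $u(\xx)=v(\xx)=0$ and the integrand reduces to $u(\x)v(\x)\kernel_{\infty}(\dist)$, symmetrically for $\xx\in\D$, and it vanishes when neither point is in $\D$. Using the symmetry of the radial kernel the two contributions agree, and the change of variables $\z=\x-\xx$ gives, for every $\x\in\D$, $\int_{\{\dist>\delta\}}\kernel_{\infty}(\dist)\d\xx=\int_{\R^n\setminus B_\delta(0)}\kernel_{\infty}(|\z|)\d\z=\tfrac{1}{2}C(\delta,n)$, a finite constant independent of $\x$ by the $L^1$ assumption. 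By Fubini (justified since $uv\in L^1(\D)$ and $\kernel_{\infty}(|\cdot|)\in L^1(\R^n\setminus B_\delta(0))$) this yields $\dual{A_{\infty}u,v}-\dual{A_{\delta}u,v}=C(\delta,n)(u,v)_{L^2(\D)}$, which is \eqref{FL_nonl_relation}. For the fractional Laplacian kernel $\kernel_{\infty}(|\z|)=c_{n,s}/(2|\z|^{n+2s})$, passing to spherical coordinates gives $\int_{\R^n\setminus B_\delta(0)}|\z|^{-n-2s}\d\z=|\mathbb{S}^{n-1}|\int_\delta^\infty r^{-1-2s}\d r=\tfrac{2\pi^{n/2}}{\Gamma(n/2)}\cdot\tfrac{1}{2s\delta^{2s}}$; multiplying by $c_{n,s}$ gives \eqref{eq:equivalence_constant1}.

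The only genuinely delicate point is the bookkeeping in the first step: correctly identifying the domain of integration for $a$ and justifying that the two absolutely convergent singular double integrals may be subtracted under the integral sign, so that the diagonal singularities cancel and only the integrable tail over $\{\dist>\delta\}$ survives. Everything after that is an elementary computation.
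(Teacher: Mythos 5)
Your argument is correct and follows essentially the same route as the paper's proof: both decompose the full double integral according to $\dist\leq\delta$ versus $\dist>\delta$, identify the near-diagonal part with $\dual{A_{\delta}u,v}$ (using that the integrand is supported in $(\DD)^2$), and use $\delta>\diam|\D|$ together with the vanishing of $u,v$ outside $\D$ to reduce the tail to the mass term $C(\delta,n)(u,v)_{L^2(\D)}$. The only difference is presentational — you extend $\dual{A_{\delta}u,v}$ to $\R^{2n}$ and subtract, while the paper splits $\dual{A_{\infty}u,v}$ over the strip $\S$ and its complement and expands the product there by symmetry — and your constant computation for the fractional kernel matches \eqref{eq:equivalence_constant1}.
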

\begin{proof}
For $\delta>0$, we define the strip $\S:=\{(\x,\xx)\in\R^{2n}\colon \dist\leq\delta\}$. For all
$u,v\in\V$, we let $U(\x,\xx):=(u(\xx)-u(\x))(v(\xx)-v(\x))$ and consider
\begin{equation*} 
\dual{A_{\infty}u,v}
=\int_{\S}U(\x,\xx)\kernel_{\infty}(\x,\xx)\d(\xx,\x)
+\int_{R^{2n}
\setminus\S}U(\x,\xx)\kernel_{\infty}(\x,\xx)\d(\xx,\x) 
=I_1+I_2. \label{eq:I1_I2}
\end{equation*}
We consider $I_1$ and $I_2$ separately. We note, that $\supp(U)\subset(\R^n\times\overline{\D})\cup(\overline{\D} \times\R^n)$. Then, it is easy to verify that $\S\cap\supp(U)\subset(\overline{\DD})^2$, and hence
\begin{multline*}
I_1=\int_{\S}U(\x,\xx)\kernel_{\infty}(\x,\xx)\d(\xx,\x)=\int_{\S\cap(\DD)^2}
U(\x,\xx)\kernel_{\infty}(\x,\xx)\d(\xx,
\x\\
=\int_{\DD}\int_{\DD}
(u(\xx)-u(\x))(v(\xx)-v(\x))\ker(\x,\xx)\d\xx\d\x
=\dual{A_{\delta}u,v
} .
\end{multline*}
Invoking the symmetry with respect to $\x$ and $\xx$ and the integrability of $\ker(\x,\xx)$, we express $I_2$ as
\begin{multline}
I_2=\int_{R^{2n}\setminus\S}U(\x,\xx)\kernel_{\infty}(\x,\xx)\d(\xx,
\x)\\
=2\int_{\D}u(\x)v(\x)\int_{\R^n\setminus B_{\delta}(\x)}\kernel_{\infty}(\x,\xx)\d\xx\d\x
-2\int_{\D}u(\x)\int_{\R^n\setminus B_{\delta}(\x)}v(\xx)\kernel_{\infty}(\x,\xx)\d\xx\d\x. 
\label{eq:I2}
\end{multline}
We notice that for $\delta\geq\diam|\D|$, the last term in~\eqref{eq:I2} vanishes due to the fact that for $\x\in\D$, $(\R^n\setminus B_{\delta}(\x))\cap\D=\emptyset$. Combining expressions for $I_1$ and $I_2$, we obtain~\eqref{FL_nonl_relation}.

For kernels $\kernel_{\infty}$ as in {Case 1} with $\sigma(\x,\xx)=\sigma$, we obtain
\begin{equation*}
\int_{\R^n\setminus B_{\delta}(\x)}\kernel_{\infty}(\x,\xx)\d\xx=\int_{\R^n\setminus 
B_{\delta}(0)}\frac{\sigma}{|\z|^{n+2s}}\d\z=\frac{\sigma\omega_n}{2s\delta^{2s}},
\end{equation*}
where $\omega_n$ is the $(n-1)$-dimensional measure of the unit sphere embedded 
in dimension $n$ and $\omega_n=\frac{2\pi^{n/2}}{\Gamma(n/2)}$. Taking $\sigma=c_{n,s}/2$, we conclude the proof.
\end{proof}
\begin{remark}
From the implementation point of view,~\eqref{FL_nonl_relation} 
provides a very useful relation between, e.g., the nonlocal and fractional 
Laplace operators, as it allows to assemble only one of those matrices and then 
only
subtract or add a mass matrix term correspondingly. 
\end{remark}

\subsection{Regularity study for the linear nonlocal problem}\label{subsec:ve_reg}

We derive a regularity result for the linear nonlocal problem~\eqref{nonl_linear} with the  Case 1 kernels.

Consider the following weak formulation of the problem~\eqref{FL_linear}: for given $f\in{\HDi}$, find $u\in\HD$ such that
\begin{equation} 
\frac{c_{n,s}}{2}\int_{\R^n}\int_{\R^n}\frac{u(\xx)-u(\x)}{\distn}
(v(\xx)-v(\x))\d\xx\d\x=\int_{\D}fv\d\x\quad\forall 
v\in\HD.\label{FL_linear_var}
\end{equation}
We recall the regularity result for the fractional Laplace problem on  bounded smooth domains stated in~\cite{grubb2015} in terms of H\"ormander $\mu$-spaces; {see also, e.g.,~\cite{bonito2017}, for the reinterpretation of this result}. For an earlier result in a less general framework; see~\cite{visik1967}.

\begin{theorem}[Fractional Laplace problem on bounded domains.]\label{regularity_grubb}
Let $\D$ be a domain with $C^{\infty}$ boundary $\partial\D$, and for $s\in(0,1)$, let $f\in {H^r(\D)}$, $r\geq-s$ and let $u\in H_{\D}^s(\R^n)$ be the solution of the fractional Laplace problem~\eqref{FL_linear_var}. Then, we have the regularity estimate  
\begin{equation}
|u|_{H_\D^{s+\alpha}(\R^n)}\leq 
C\norm{f}_{{H^r(\D)}},\label{FL_regularity}
\end{equation}
where $\alpha=\min\{s+r,1/2-\varepsilon\}$, with arbitrarily small $\varepsilon>0$.
\end{theorem}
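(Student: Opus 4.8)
The statement to prove, Theorem~\ref{regularity_grubb}, is essentially a restatement of a known result from Grubb's work on fractional-order pseudodifferential operators, so the "proof" here is really a matter of translating the H\"ormander $\mu$-space regularity theory into the language of standard Sobolev spaces $H^\sigma$. Let me sketch how I would organize such a translation.

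\medskip

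\textbf{Proof plan.} The plan is to invoke the main regularity theorem of \cite{grubb2015} for the Dirichlet problem for $(-\Delta)^s$ on a smooth bounded domain, which asserts that if $f\in H^r(\D)$ with $r\ge -s$, then the solution $u$ belongs to the H\"ormander space $H^{s(r+2s)}_\mu(\overline{\D})$ of order determined by the $\mu$-transmission condition at $\partial\D$; the key structural fact is that solutions behave like $d(\x)^s$ near the boundary (where $d$ is the distance to $\partial\D$), which is exactly the $s$-transmission regularity that limits the smoothness gain. First I would recall the continuous embedding of the relevant $\mu$-space into $H^\sigma_\D(\R^n)$: because the solution has the boundary behavior $d^s\cdot(\text{smooth})$, the extension by zero lies in $H^\sigma$ precisely for $\sigma<s+1/2$, and in fact one has $\widetilde H^{s+r+s}$-type regularity capped at $s+1/2-\varepsilon$. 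Combining these embeddings with the a priori bound from \cite{grubb2015} yields the operator norm estimate $|u|_{H^{s+\alpha}_\D(\R^n)}\le C\|f\|_{H^r(\D)}$ with $\alpha=\min\{s+r,\,1/2-\varepsilon\}$.

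\medskip

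More concretely, the steps in order would be: (i) reduce to the case of data extended appropriately and set up the fractional Dirichlet problem \eqref{FL_linear_var} as the weak form of $(-\Delta)^s u=f$ in $\D$, $u=0$ in $\R^n\setminus\D$, matching the convention of \cite{DELIAlaplacian} so that the equivalence of energy spaces \eqref{spaces_equiv_Rn} applies; (ii) quote the precise statement of the elliptic regularity result of \cite{grubb2015} in $\mu$-spaces with $\mu=s$, including the a priori estimate, and note (following, e.g., \cite{bonito2017}) that for $r$ in the stated range the solution lies in $H^{s+\min\{s+r,\,1/2-\varepsilon\}}(\R^n)$ after extension by zero, with the cap $1/2-\varepsilon$ coming from the fact that $d^s\notin H^{s+1/2}$ near the boundary; (iii) transfer the norm bound, using that the $\mu$-space norm controls the $H^\sigma$-seminorm on the support and that the zero-extension is bounded on these scales; (iv) conclude \eqref{FL_regularity}.

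\medskip

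\textbf{Main obstacle.} The genuine difficulty is not in any computation but in correctly matching definitions and conventions: the result of \cite{grubb2015} is phrased for the $x$-independent operator $(-\Delta)^s$ in terms of H\"ormander $\mu$-spaces $H^{\mu(t)}_p(\overline{\D})$, and one must verify that, restricted to the Hilbertian scale $p=2$ and for the exponent range relevant here, these coincide with (or continuously embed into) the spaces $H^\sigma_\D(\R^n)$ used throughout this paper, and that the a priori estimate carries the correct dependence on $\|f\|_{H^r(\D)}$ rather than on a stronger norm. In particular one must be careful that the smoothness gain is exactly $2s$ in the interior but is limited to $s+1/2-\varepsilon$ globally because the solution generically has a $d(\x)^s$ singular layer at $\partial\D$; pinning down that this is the sharp threshold (and the source of the $\varepsilon$) is the crux of the argument, and is precisely where one leans on \cite{grubb2015,ros_oton2014}. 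Everything else is bookkeeping with the isomorphisms $r_{\DL}$, $e_{\DL}$ and the norm equivalences \eqref{spaces_equiv}–\eqref{spaces_equiv_Rn} already recorded in Section~\ref{sec:prelim}.
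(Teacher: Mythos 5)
Your proposal matches the paper's treatment: the paper offers no independent proof of Theorem~\ref{regularity_grubb} but simply quotes the elliptic regularity theory of \cite{grubb2015} (with the Sobolev-scale reinterpretation as in \cite{bonito2017}), which is exactly the citation-plus-translation argument you outline, including the $d^s$ boundary layer as the source of the $s+1/2-\varepsilon$ cap. Your identification of the main care point --- matching the H\"ormander $\mu$-space scale at $p=2$ with $H^{s+\alpha}_\D(\R^n)$ under extension by zero --- is consistent with how the result is used in the paper, so nothing further is needed.
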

{We note that independently of the smoothness of the right-hand side $f$, we cannot expect the solution $u$ to be any smoother than $H_{\D}^{s+1/2-\varepsilon}(\R^n)$; see, e.g.,~\cite{bonito2017} for a counterexample.  }

Before extending this result to the linear nonlocal problem~\eqref{nonl_linear}, we first provide an auxilliary result that will be useful for the main proof of Theorem~\ref{thm:regularity_linear}.

\begin{proposition}\label{prop:regularity_g}
Let $\DL\subset\R^n$ be an open set, let $\omega\in H_\D^{r}(\DL)$ for $r\geq 0$, where for $r=0$, $H_\D^0(\DL)\equiv L^2(\D)$, and let
\begin{equation}
g(\x):=\int_{\DL\setminus 
B_{\delta}(\x)}\frac{\omega(\xx)}{\distn}\d\xx \quad\mbox{for    
}\x\in\DL,\quad 0<s<1.
\end{equation}
Then, $g\in H_\D^{r}(\DL)$, $r\geq 0$, and the estimate
\begin{equation}
\norm{g}_{H^r(\DL)}\leq C\norm{\omega}_{H^{r}(\DL)}\label{eq: g_regularity}
\end{equation}
holds, where $C=\frac{\pi^{n/2}}{\Gamma(n/2)\delta^{2s}s}$.
\end{proposition}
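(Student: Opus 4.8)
The plan is to bound $g$ in $H^r(\DL)$ by estimating the convolution-type kernel $K(\x,\xx) := \distn^{-1}\mathbf{1}_{\{|\x-\xx|>\delta\}}$ that defines $g$ from $\omega$. The crucial observation is that this kernel is \emph{bounded} and \emph{integrable}: since $|\x-\xx|>\delta$ on the domain of integration, $K(\x,\xx)\leq \delta^{-(n+2s)}$, and moreover $\int_{\R^n\setminus B_\delta(\x)} K(\x,\xx)\d\xx = \int_{\R^n\setminus B_\delta(0)}|\z|^{-(n+2s)}\d\z = \frac{\omega_n}{2s\delta^{2s}} = \frac{\pi^{n/2}}{\Gamma(n/2)\delta^{2s}s} =: C$, exactly the constant claimed (using $\omega_n = 2\pi^{n/2}/\Gamma(n/2)$, as in the proof of Proposition~\ref{prop:LP_nonlocal}). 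So $g$ is essentially a (sub-)averaging operator applied to $\omega$, and one expects it to be no worse than $\omega$ in any Sobolev scale, with operator norm $C$.

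First I would treat the case $r=0$. Writing $g(\x) = \int_{\DL\setminus B_\delta(\x)} K(\x,\xx)\omega(\xx)\d\xx$ and applying the Cauchy--Schwarz inequality in the form $|g(\x)|^2 \leq \big(\int K(\x,\xx)\d\xx\big)\big(\int K(\x,\xx)|\omega(\xx)|^2\d\xx\big) \leq C \int_{\DL} K(\x,\xx)|\omega(\xx)|^2\d\xx$, then integrating over $\x\in\DL$ and swapping the order of integration (Tonelli), the inner integral $\int_{\DL\setminus B_\delta(\xx)} K(\x,\xx)\d\x \leq C$ again by symmetry of $K$, giving $\norm{g}_{L^2(\DL)}^2 \leq C^2\norm{\omega}_{L^2(\DL)}^2$. (Here I use $\omega\in H^0_\D(\DL)\equiv L^2(\D)$, so $\omega$ vanishes outside $\D$; the $\DL$ versus $\D$ bookkeeping only shrinks domains and is harmless.) For integer $r=m$, I would differentiate under the integral sign: since $K(\x,\xx)=\kappa(\x-\xx)$ is a convolution kernel (restricted to $|\z|>\delta$, where it is smooth), $\partial_\x^\beta g(\x)$ can be moved onto $\omega$ via $\partial_\x^\beta[\kappa(\x-\xx)\omega(\xx)] $ and an integration by parts in $\xx$, transferring derivatives to $\omega$ up to boundary terms that vanish because $\omega\in H^m_\D(\DL)$ is compactly supported in $\D$ away from $\partial\DL$; then apply the $L^2$ estimate to $D^\beta\omega$. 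The main subtlety will be handling the boundary of the truncation region $\{|\x-\xx|=\delta\}$ when differentiating: the indicator $\mathbf{1}_{\{|\x-\xx|>\delta\}}$ is not smooth, so differentiating $g$ produces a surface-integral term over the sphere $|\x-\xx|=\delta$. I would argue this surface term is itself bounded by the same type of estimate (the kernel restricted to the sphere is bounded, and one integrates against traces of $\omega$), or, more cleanly, note that the convolution structure $g = (\kappa\mathbf{1}_{|\cdot|>\delta})*\omega$ makes $\widehat{g}(\vt) = \widehat{m}(\vt)\widehat{\omega}(\vt)$ with a bounded multiplier $\widehat{m}$, and then Plancherel handles all $r\geq 0$ uniformly (the multiplier norm being exactly $C = m(0) = \int m$ since $m\geq 0$). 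This Fourier route is the cleanest: $|\widehat{m}(\vt)|\leq \widehat{m}(0) = C$, so $\norm{g}_{H^r}^2 = \int (1+|\vt|^2)^r|\widehat{m}(\vt)|^2|\widehat{\omega}(\vt)|^2\d\vt \leq C^2\norm{\omega}_{H^r}^2$.

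For non-integer $r = m+s'$ with $s'\in(0,1)$, if one stays in the real-variable framework one interpolates, or directly estimates the Gagliardo seminorm of $D^\beta g$ by writing $D^\beta g(\x) - D^\beta g(\xx)$ as a double convolution-difference and using the bound on the Gagliardo seminorm of $D^\beta\omega$; but this is exactly where the Fourier characterization is most convenient, since for $\DL = \R^n$ the $H^r$ norm is just the weighted $L^2$ norm of $\widehat{\omega}$ and the estimate above is immediate for all real $r\geq 0$ at once. The one genuine obstacle is that $\DL$ is a general open set, not $\R^n$, so $g$ is only defined on $\DL$ and Fourier analysis is not directly available; however, since $\omega\in H^r_\D(\DL)$ extends by zero to $\widetilde\omega\in H^r(\R^n)$ (membership in $H^r_\D$ means vanishing outside $\overline\D\subset\DL$, so the zero extension is in $H^r(\R^n)$), and since for $\x\in\DL$ we have $g(\x) = \int_{\R^n\setminus B_\delta(\x)}\distn^{-1}\widetilde\omega(\xx)\d\xx$ (the integrand already vanishes outside $\D\subset\DL$), I would define $\bar g := (\kappa\mathbf{1}_{|\cdot|>\delta})*\widetilde\omega$ on all of $\R^n$, prove $\norm{\bar g}_{H^r(\R^n)}\leq C\norm{\widetilde\omega}_{H^r(\R^n)} = C\norm{\omega}_{H^r(\DL)}$ by the multiplier argument, and then restrict: $\norm{g}_{H^r(\DL)} = \norm{\bar g|_{\DL}}_{H^r(\DL)} \leq \norm{\bar g}_{H^r(\R^n)}$. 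Finally I would check that $\bar g$ vanishes outside $\D$... which it does \emph{not} in general, so to land in $H^r_\D(\DL)$ I would instead only claim $g\in H^r(\DL)$ on $\DL$ with the stated bound (which is all the subsequent Theorem~\ref{thm:regularity_linear} needs), or restrict the statement's conclusion accordingly; the norm inequality~\eqref{eq: g_regularity} itself follows in all cases from the boundedness of the nonnegative multiplier by its value $C$ at the origin.
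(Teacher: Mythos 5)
Your proposal is correct in substance but follows a genuinely different route from the paper. The paper works entirely in real variables: it writes $g(\xx)-g(\x)$, after a change of variables, as an integral of $\omega(\xx-\z)-\omega(\x-\z)$ against the truncated kernel, applies Cauchy--Schwarz using exactly the fact you isolate (the kernel $|\z|^{-n-2s}\mathbf{1}_{\{|\z|>\delta\}}$ has total mass $C=\pi^{n/2}/(\Gamma(n/2)\delta^{2s}s)$), and then uses Fubini and translation invariance to bound the Gagliardo seminorm of $g$ by $C$ times that of $\omega$, with the $L^2$ bound obtained the same way --- i.e.\ your ``$r=0$'' Schur-test computation, upgraded to the seminorm by differencing rather than by Fourier analysis. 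Your multiplier argument, $\widehat{g}=\widehat{m}\,\widehat{\omega}$ with $0\le\widehat{m}\le\widehat{m}(0)=C$ by positivity and integrability of $m$, buys a cleaner, uniform treatment of all $r\ge 0$ at once (in particular it avoids the paper's implicit restriction of its seminorm formula to $r\in(0,1)$ and the surface-term worry for integer $r$, which in fact also disappears in the real-variable approach because $D^\alpha g=m\ast D^\alpha\widetilde\omega$, the derivatives falling on $\omega$); what it costs is the explicit constant: passing from Bessel-potential norms on $\R^n$ back to the Slobodeckij norms on $\DL$ used in the statement, and extending $\omega$ by zero, introduce equivalence constants, so you recover \eqref{eq: g_regularity} only with $C$ replaced by a comparable constant. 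Also note that your asserted equality $\norm{\widetilde\omega}_{H^r(\R^n)}=\norm{\omega}_{H^r(\DL)}$ should be an inequality (the zero extension adds cross terms; its boundedness is what the paper's isomorphism $H^r_\D(\DL)\simeq H^r_\D(\R^n)$ provides, using $\overline{\D}\subset\DL$). Your closing observation is apt: $g$ need not vanish outside $\D$, so the conclusion really is the unconstrained estimate $g\in H^r(\DL)$ with \eqref{eq: g_regularity}, which is all that Theorem~\ref{thm:regularity_linear} uses, and is also all that the paper's own proof establishes.
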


\begin{proof}
Because $g=0$ for $\delta>\diam|\DL|$, we need only consider $\delta<\diam|\DL|$. For all 
$\x,\xx\in\R^n$ we have the estimate
\begin{align*}
\left|g(\xx)-g(\x)\right|&=\left|\int_{\DL\setminus 
B_{\delta}(\xx)}\frac{\omega(\z)}{|\xx-\z|^{n+2s}}\d\z-
\int_{\DL\setminus 
B_{\delta}(\x)}\frac{\omega(\z)}{|\x-\z|^{n+2s}}\d\z\right|\\
&\leq\sqrt{\int_{\DL\setminus 
B_{\delta}(0)}\frac{(\omega(\xx-\z)-\omega(\x-\z))^2}{|\z|^{n+2s}}\d\z}\sqrt{
\int_{\DL\setminus B_{\delta}(0)}\frac{\d\z}{|\z|^{n+2s}}}.
\end{align*}
Taking into account that the last term can be computed exactly, for $r>0$, we obtain
\begin{align*} 
\norms{g}_{H^{r}(\DL)}^2&=\int_{\DL}\int_{\DL}\frac{(g(\xx)-g(\x))^2}{
|\xx-\x|^ { n+2r}}\d\xx\d\x\\
&\leq
\frac{\pi^{n/2}}{\Gamma(n/2)\delta^{2s}
s}\int_{\DL}\int_{\DL}\int_{\DL\setminus 
B_{\delta}(0)}\frac{(\omega(\xx-\z)-\omega(\x-\z))^2}{ |\xx-\x|^
{ n+2r}{|\z|^{n+2s}}}\d\z\d\xx\d\x\\
&=\frac{\pi^{n/2}}{\Gamma(n/2)\delta^{2s}
s}\int_{\DL\setminus 
B_{\delta}(0)}\frac{1}{|\z|^{n+2s}}\d\z\int_{\DL}\int_{\DL}\frac{
(\omega(\xx)-\omega(\x))^2}{
|\xx-\x|^ { n+2r}}\d\xx\d\x\\
&=\left(\frac{\pi^{n/2}}{\Gamma(n/2)\delta^{2s}
s}\right)^2\norms{\omega}_{H^r(\DL)}^2.
\end{align*}
Following the same steps as above we obtain the corresponding $L^2$-norm estimate, and conclude the proof.
\end{proof}

We now prove the main result of this section.

\begin{theorem}[Nonlocal problem with truncated interactions]\label{thm:regularity_linear}
Let $\D$ be a domain with $C^{\infty}$ boundary $\partial\D$, and let $f\in  H^r(\D)$, $r\geq 0$, and let $u\in\VD$ be the solution of~\eqref{nonl_linear_var} with the Case 1 kernel with $\sigma(\x,\xx)=\sigma$ and $\delta>0$. Then, for a positive constant $C>0$, we have the regularity estimates
\begin{equation}
|u|_{H_\D^{s+\alpha}(\R^n)}\leq C\norm{f}_{{H^r(\D)}},
\end{equation} \label{nonl_regularity}
where $\alpha=\min\{s+r,1/2-\varepsilon\}$ for some arbitrarily small $\varepsilon>0$. We note that by writing $u$ in~\eqref{nonl_regularity}, we mean its extension $e_{\DD}u$.
\end{theorem}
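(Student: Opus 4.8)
The strategy is to compare the truncated nonlocal problem~\eqref{nonl_linear_var} with the full fractional Laplace problem~\eqref{FL_linear_var} and to transfer the regularity of Theorem~\ref{regularity_grubb} to $u$ by controlling the truncation error. Recall that by Proposition~\ref{prop:LP_nonlocal}, for $\delta>\diam|\D|$ the operators are related via $\dual{A_\infty u,v}=\dual{A_\delta u,v}+C(\delta,n)(u,v)_{L^2(\D)}$, so if $u\in\VD$ solves $a(u,v)=\dual{f,v}$ then, with $\widetilde u=e_{\DD}u$, we have $a_\infty(\widetilde u,v)=\dual{f+C(\delta,n)u,v}$ for all $v$; that is, $\widetilde u$ is the solution of the \emph{fractional} Laplace problem with right-hand side $\widehat f:=f+C(\delta,n)u$. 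Since $u\in H^s_\D(\R^n)\subset L^2(\D)\subset H^0(\D)$, we have $\widehat f\in H^r(\D)$ whenever $f\in H^r(\D)$ with $r\ge 0$, and Theorem~\ref{regularity_grubb} immediately gives $|\widetilde u|_{H^{s+\alpha}_\D(\R^n)}\le C\norm{\widehat f}_{H^r(\D)}\le C(\norm{f}_{H^r(\D)}+\norm{u}_{L^2(\D)})$, and then~\eqref{bound_fs} together with the embedding $H^{-s}_\D(\DD)\supset L^2(\D)\supset H^r(\D)$ absorbs $\norm{u}_{L^2(\D)}$ into $\norm{f}_{H^r(\D)}$. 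This settles the case $\delta>\diam|\D|$.

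For general $\delta>0$ one cannot invoke Proposition~\ref{prop:LP_nonlocal} directly, so instead I would write the identity at the level of bilinear forms without assuming $\delta$ large. Split $a_\infty(\widetilde u,v)=a(\widetilde u,v)+R(\widetilde u,v)$ where the remainder collects the interactions across distance $>\delta$; by the symmetrization used in~\eqref{eq:I2},
\[
R(\widetilde u,v)=2\int_{\D}\widetilde u(\x)v(\x)\,g_0(\x)\d\x-2\int_{\D}\bigl(\widetilde u(\x)h_v(\x)+v(\x)h_{\widetilde u}(\x)\bigr)\d\x,
\]
where $g_0(\x)=\int_{\R^n\setminus B_\delta(\x)}\kernel_\infty(\x,\xx)\d\xx=\sigma\omega_n/(2s\delta^{2s})$ is constant and $h_w(\x)=\int_{\R^n\setminus B_\delta(\x)}w(\xx)\kernel_\infty(\x,\xx)\d\xx$. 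Hence $\widetilde u$ solves the fractional problem with right-hand side
\[
\widehat f:=f+C(\delta,n)u-2h_{\widetilde u}+2\,(\text{the operator whose }v\text{-pairing is }\textstyle\int_\D v\,h_{\widetilde u}),
\]
but it is cleaner to keep the extra linear functional $v\mapsto -2\int_\D v\,h_{\widetilde u}\d\x$ separate and observe that it is represented by the $L^2(\D)$ function $-2\,r_\D h_{\widetilde u}$. The point of Proposition~\ref{prop:regularity_g} (applied with $\DL=\R^n$, $\omega=\widetilde u\in H^r_\D(\R^n)$, noting $\omega$ is supported in $\overline\D$) is precisely that $h_{\widetilde u}\in H^r(\R^n)$ with $\norm{h_{\widetilde u}}_{H^r}\le C\norm{\widetilde u}_{H^r}$. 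Therefore $\widehat f\in H^r(\D)$ with $\norm{\widehat f}_{H^r(\D)}\le \norm{f}_{H^r(\D)}+C\norm{u}_{H^{\min\{r,s+\alpha\}}}$, and one closes a bootstrap: starting from $u\in H^s_\D(\R^n)$ (so the $H^0$-norm of $\widehat f$ is controlled) Theorem~\ref{regularity_grubb} lifts $u$ to $H^{s+\alpha_0}_\D(\R^n)$ with $\alpha_0=\min\{s,1/2-\varepsilon\}$; if $r$ requires a higher lift, iterate, at each stage feeding the improved norm of $u$ back through Proposition~\ref{prop:regularity_g} into $\widehat f$, until $\alpha$ reaches $\min\{s+r,1/2-\varepsilon\}$. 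Finiteness of the bootstrap follows because $\alpha$ is capped at $1/2-\varepsilon$ and each step increases regularity by a fixed positive amount.

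The main obstacle I anticipate is the bookkeeping in the general-$\delta$ case: making sure the remainder functional $v\mapsto R(\widetilde u,v)$ really is represented by an $L^2(\D)$ (indeed $H^r(\D)$) function rather than merely an element of $H^{-s}_\D(\DD)$, and verifying that Proposition~\ref{prop:regularity_g} applies with $\omega=\widetilde u$ even though $\widetilde u$ only has the \emph{a priori} regularity $H^s$ at the start of the bootstrap — here one uses that Proposition~\ref{prop:regularity_g} is stated for all $r\ge 0$ and that the constant is $\delta$-uniform, so each bootstrap step is legitimate. A secondary technical point is the interplay of the cutoff $\delta$ with $\diam|\D|$: for $\delta\ge\diam|\D|$ the term $h_{\widetilde u}$ restricted to $\D$ vanishes (as in~\eqref{eq:I2}) and only the mass term $C(\delta,n)u$ survives, which is the clean situation; for small $\delta$ the $h_{\widetilde u}$ term is genuinely present and its regularity must be supplied by Proposition~\ref{prop:regularity_g}. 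Once these points are in place the estimate~\eqref{nonl_regularity} follows by combining the final bootstrap step of Theorem~\ref{regularity_grubb} with~\eqref{bound_fs}.
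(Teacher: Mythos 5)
Your proposal follows essentially the same route as the paper: recast the truncated problem as a fractional Laplace problem with a modified right-hand side consisting of $f$, the mass term $C(\delta,n)u$, and the tail term (the paper's $g$, your $h_{\widetilde u}$), control that tail term via Proposition~\ref{prop:regularity_g}, apply Theorem~\ref{regularity_grubb}, and bootstrap until $\alpha=\min\{s+r,1/2-\varepsilon\}$, exactly as in the paper's proof. Apart from a harmless double-counting slip in your remainder formula (after symmetrization the cross term should appear as $-2\int_{\D}v\,h_{\widetilde u}\,\d\x$, equivalently $-\int_{\D}\bigl(\widetilde u\,h_v+v\,h_{\widetilde u}\bigr)\d\x$, not $-2\int_{\D}\bigl(\widetilde u\,h_v+v\,h_{\widetilde u}\bigr)\d\x$), which does not affect any estimate, the argument is correct.
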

\begin{proof}
Without loss of generality, we conduct the proof for $\sigma={c_{n,s}/2}$. Following the steps of the proof of Proposition~\ref{prop:LP_nonlocal}, we can write for all $v\in H^s_{\D}(\R^n)$
\begin{multline}
\frac{c_{n,s}}{2}\int_{\R^n}\int_{\R^n}\frac{u(\xx)-u(\x)}{\distn}
(v(\xx)-v(\x))\d\xx\d\x\\
=\frac{c_{n,s}}{2}\int_{\DD}\int_{\DD\cap 
B_{\delta}(\x)}\frac{u(\xx)-u(\x)}{\distn}
(v(\xx)-v(\x))\d\xx\d\x\\
+C(\delta,n)\int_{\D}u(\x)v(\x)\d\x
-c_{n,s}\int_{\D}g(\x)v(\x)\d\x,\label{eq:regularity_main}
\end{multline}
where $C(\delta,n)$ is defined in~\eqref{eq:equivalence_constant1} and 
$$g(\x):=\int_{
\D\setminus 
B_{\delta}(\x)}\frac{u(\xx)}{\distn}\d\xx.$$ 
Taking 
into account that for $f\in H^r(\D)$, $u\in\VD$ is a solution of~\eqref{nonl_linear_var}, we can then rewrite~\eqref{eq:regularity_main} as 
\begin{align}
\frac{c_{n,s}}{2}\int_{\R^n}\int_{\R^n}\frac{u(\xx)-u(\x)}{\distn}
(v(\xx)-v(\x))\d\xx\d\x=\int_{\D}Fv\d\x,\label{eq:regularity_2}
\end{align}
where 
\begin{align*}
\int_{\D}F v\d\x:=\int_{\D}f 
 v\d\x-C(\delta,n)\int_{\D}u  v\d\x
+c_{n,s}\int_{ \D}g  v\d\x
\end{align*}
or simply $F=f -C(\delta,n)u+c_{n,s}g$.
For $r=0$, i.e., $f\in L^2(\D)$, by using Proposition~\ref{prop:regularity_g},~\eqref{poincare}, and~\eqref{bound_f} we obtain that $F\in L^2(\D)$ and
\begin{align*}
 \norm{F}_{L^2(\D)}\leq\norm{f}_{L^2(\D)}+2C(\delta,n)\norm{u}_{L^2(\D)}\leq 
C\norm{f}_{L^2(\D)}.
\end{align*}
Now, applying Theorem~\ref{regularity_grubb} to~\eqref{eq:regularity_2} with the right-hand side $F$, we obtain that $u\in H_\D^{s+\alpha_0}(\R^n)$, where $\alpha_0=\min\{s,1/2-\varepsilon\}$ with $\varepsilon>0$ and moreover, we have that
\begin{equation}
 \norms{u}_{H^{s+\alpha_0}_\D(\R^n)}\leq 
C\norm{f}_{L^2(\D)}.\label{eq:regularity_temp1}
\end{equation}
for $C>0$. Following a boot-strapping technique, let now $f\in H^r(\D)$ for $r>0$, then we obtain that $F\in H^{\beta_1}(\D)$, $\beta_1=\min\{r,s+\alpha_0\}$, and by Theorem~\ref{regularity_grubb}, $u\in H^{s+\alpha_1}_\D(\R^n)$, where $\alpha_1=\min\{s+r,3s,1/2-\varepsilon\}$, $\varepsilon>0$. Using repeatedly the regularity result~\eqref{eq:regularity_temp1} from the previous step, Proposition~\ref{prop:regularity_g}, and the Sobolev imbedding theorem (see, e.g.,~\cite{grisvard}), we obtain the regularity estimate 
\begin{equation*}
\norms{u}_{H^{s+\alpha_1}_\D(\R^n)}\leq 
C\norm{F}_{H^{\beta_1}(\D)}\leq 
C\norm{f}_{H^{r}(\D)},
\end{equation*}
for some constant $C>0$. By iterating the previous arguments, we obtain that, for $m\in\mathbb{N}$, $F\in H^{\beta_m}(\D)$, $\beta_m=\min\{r,2ms,s+1/2-\varepsilon\}$, $u\in H^{s+\alpha_m}_\D(\R^n)$, $\alpha_m=\min\{s+r,(1+2m)s,1/2-\varepsilon\}$, and,
\begin{equation}
 \norms{u}_{H^{s+\alpha_m}_\D(\R^n)}\leq C\norm{F}_{H^{\beta_m}(\D)}\leq 
C\norm{f}_{H^r(\D)},
\end{equation}
 for $C>0$. Clearly, for $m$ large enough, we obtained that $\beta_m=\min\{r,s+1/2-\varepsilon\}$, 
and $\alpha_m=\min\{s+r,1/2-\varepsilon\}$. Denoting by $\alpha=\alpha_m$, we have obtain \eqref{nonl_regularity}. 
\end{proof}

To the best of our knowledge, the regularity of the solution of the fractional Laplace problem~\eqref{FL_linear} on less regular, e.g., Lipschitz, domains remains an active field of research. In \cite{acosta2017}, similar results as in Theorem~\ref{regularity_grubb} in terms of (weighted) Sobolev spaces are obtained for Lipschitz domains under the assumption of H\"older regularity on the data. However, these higher regularity assumptions on the data are not applicable to the regularity of the solution of the variational inequality considered in Section~\ref{sec:vi}. 
\subsection{Discretization of the linear problem and a priori error estimates}\label{subsec:disc_linear}

Let $\D$ be a convex domain with $C^2$-boundary. We subdivide $\DD$ into a shape regular quasi-uniform triangulation $\{\mathcal{T}_\N\}_\N$; see, e.g.,~\cite{brennerscott,ciarlet}. We denote by $h$ the maximum diameter of the elements $K\in\mathcal{T}_\N$ and set $\overline{\D^h\cup\DI^h}=\cup_{K\in\mathcal{T}_\N}\overline{K}$ under the assumption that  $\D^h\subset\D$ and $|\D\setminus\D^h|\leq c h^2$, $|\DI\setminus\DI^h|\leq c h^2$, which can be realized by requiring boundary nodes to lie on the boundary of $\D$ and $\DI$, respectively.

We use the subspace $V_\N\subset\V$ of piecewise linear polynomials associated with $\mathcal{T}_h$ that satisfy a homogeneous Dirichlet volume constraint in $\DI^h$, i.e.,
\begin{equation}
{\V_\N=\{v_\N\in C^0({\overline{\DD}})\colon 
v_\N|_{K}\in\mathcal{P}_1(K)\; \forall K\in\mathcal{T}_h,\; v_\N=0\; \text{ on 
}\; {(\DD)\setminus\D^h}\}.}\label{eq:V_discrete}
\end{equation}
Let $\mathcal{J}_\N^m$ denote the set of all interior nodes of $\D$ and $\mathcal{J}_\N^l$ the set of all nodes. We can represent $\X_\N=\spann\{\phi_p, \;p\in\mathcal{J}_\N^l\}$ and $\V_\N=\spann\{\phi_p, \;p\in\mathcal{J}_\N^m\}$, where $\phi_p$ are the nodal Lagrange basis 
functions. 

The discrete formulation of the linear problem~\eqref{nonl_linear_var} becomes: find $u_\N\in\V_\N$ such that
\begin{equation}
{a(u_\N,v_\N)}=\dual{f,v_\N}\quad\forall v\in\V_\N.\label{nonl_linear_disc}
\end{equation}
The existence and uniqueness of the solution of~\eqref{nonl_linear_disc} 
directly follows from the 
{conformity of the discrete spaces $\V_\N$}.  In addition, the best approximation property holds~\cite{Dunonlocal2012}:
\begin{equation}
 \norm{u-u_\N}_\V\leq\min_{v_\N\in\V_\N}\norm{u-v_\N}_\V\to 0\;\;\text{as  
}{\N\to 0}.\label{best_approx}
\end{equation}

To obtain convergence estimates, we require that there exists a quasi-interpolation operator $\Pi_\N\colon 
H_\D^s(\R^n)\to V_\N$ such that
\begin{equation}
\norm{u-\Pi_\N u}_{H^s_\D(\R^n)}\leq 
Ch^{\alpha}\norm{u}_{H^{s+\alpha}_\D(\R^n)} \label{eq:interpol}
\end{equation}
for $0<\alpha<1/2$, and some positive constant $C$ independent of $h$. Then, exploiting the regularity of the solution and using the best approximation property~\eqref{best_approx}, we  obtain the following a priori error estimates for the finite elements discretization for the linear problem~\eqref{nonl_linear_var}. 

\begin{proposition}\label{prop:convergence_rates}
Let $\D$ be a domain with $C^{\infty}$ boundary $\partial\D$, and let for $f\in H^r(\D)$,  $r\geq 0$, $u\in\V$ be a solution of~\eqref{nonl_linear_var} with kernel $\ker$ satisfying~\eqref{kernel_prop1} and defined as in {\it Case~1} with $\sigma(\x,\xx)=\sigma$. Then for $s\in(0,1)$ there exists a constant $C>0$, independent of $h$, such that
\begin{subequations}
\begin{align}
 \norm{u-u_\N}_{H_\D^s(\R^n)}\leq 
Ch^{\alpha}\norm{f}_{H^r(\D)},\label{eq:convergence_ratesHS}\\
\norm{u-u_\N}_{L^2(\D)}\leq 
Ch^{\alpha+\beta}\norm{f}_{H^r(\D)},\label{eq:convergence_ratesL2}
\end{align} \label{eq:convergence_rates}
\end{subequations}
where $\alpha=\min\{s+r,1/2-\varepsilon\}$, and 
$\beta=\min\{s,1/2-\varepsilon\}$ for some $\varepsilon>0$.
\end{proposition}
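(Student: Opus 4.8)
The plan is to derive the two estimates from the regularity result of Theorem~\ref{thm:regularity_linear} together with a C\'ea-type best approximation argument and a standard Aubin--Nitsche duality trick. First I would observe that, since the kernel is of Case~1 type, the norm equivalences \eqref{spaces_equiv} and \eqref{spaces_equiv_Rn} let me work interchangeably in $\V$, $H^s_\D(\DD)$, and $H^s_\D(\R^n)$; in particular the best approximation property \eqref{best_approx} reads $\norm{u-u_\N}_{H^s_\D(\R^n)}\le C\inf_{v_\N\in\V_\N}\norm{u-v_\N}_{H^s_\D(\R^n)}$. By Theorem~\ref{thm:regularity_linear}, $u\in H^{s+\alpha}_\D(\R^n)$ with $\alpha=\min\{s+r,1/2-\varepsilon\}$ and $\norms{u}_{H^{s+\alpha}_\D(\R^n)}\le C\norm{f}_{H^r(\D)}$. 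Choosing $v_\N=\Pi_\N u$ and invoking the interpolation estimate \eqref{eq:interpol} then yields
\begin{equation*}
\norm{u-u_\N}_{H^s_\D(\R^n)}\le C\norm{u-\Pi_\N u}_{H^s_\D(\R^n)}\le Ch^{\alpha}\norms{u}_{H^{s+\alpha}_\D(\R^n)}\le Ch^{\alpha}\norm{f}_{H^r(\D)},
\end{equation*}
which is \eqref{eq:convergence_ratesHS}.

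For the $L^2(\D)$ estimate \eqref{eq:convergence_ratesL2} I would run an Aubin--Nitsche duality argument. Let $e:=u-u_\N\in\V$ (extended by zero), and let $w\in\V$ solve the adjoint problem $a(v,w)=(e,v)_{L^2(\D)}$ for all $v\in\V$; since $a(\cdot,\cdot)$ is symmetric this is the same operator $A_\delta$, with right-hand side $e\in L^2(\D)$. Applying Theorem~\ref{thm:regularity_linear} to $w$ with data $e\in L^2(\D)$ (i.e.\ $r=0$) gives $w\in H^{s+\beta}_\D(\R^n)$ with $\beta=\min\{s,1/2-\varepsilon\}$ and $\norms{w}_{H^{s+\beta}_\D(\R^n)}\le C\norm{e}_{L^2(\D)}$. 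Then, using Galerkin orthogonality $a(e,v_\N)=0$ for all $v_\N\in\V_\N$, choosing $v_\N=\Pi_\N w$, and the continuity of $a(\cdot,\cdot)$ on $\V$,
\begin{equation*}
\norm{e}_{L^2(\D)}^2=a(e,w)=a(e,w-\Pi_\N w)\le C\norm{e}_{\V}\norm{w-\Pi_\N w}_{\V}\le C h^{\alpha}\norm{f}_{H^r(\D)}\cdot h^{\beta}\norms{w}_{H^{s+\beta}_\D(\R^n)},
\end{equation*}
and bounding $\norms{w}_{H^{s+\beta}_\D(\R^n)}\le C\norm{e}_{L^2(\D)}$ and dividing through by $\norm{e}_{L^2(\D)}$ yields \eqref{eq:convergence_ratesL2}.

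The routine points I would still need to check: that the $\V$-norm and the $H^s_\D(\R^n)$-norm are equivalent when passing between the extended and restricted functions (this is exactly \eqref{spaces_equiv}--\eqref{spaces_equiv_Rn} plus the isomorphism of the extension/restriction operators), and that $\alpha,\beta<1/2$ so that the interpolation hypothesis \eqref{eq:interpol} is applicable — both are immediate from the definitions. The main obstacle, or rather the only genuine subtlety, is the duality step: one must make sure the adjoint solution $w$ really does gain the extra regularity, which requires that the data $e=u-u_\N$ restricted to $\D$ lies in $L^2(\D)$ (true since $u_\N\in C^0(\overline{\DD})$ and $u\in L^2$) and that Theorem~\ref{thm:regularity_linear} applies with $r=0$ on the $C^\infty$ domain $\D$ — which it does, giving precisely the exponent $\beta=\min\{s,1/2-\varepsilon\}$. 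Everything else is the standard C\'ea plus Aubin--Nitsche machinery, so I would keep the write-up short and simply cite \eqref{best_approx}, \eqref{eq:interpol}, and Theorem~\ref{thm:regularity_linear} at the appropriate places.
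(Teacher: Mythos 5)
Your proposal is correct and follows essentially the same route as the paper: the energy-norm bound via the best approximation property \eqref{best_approx}, the interpolation estimate \eqref{eq:interpol}, and the regularity result of Theorem~\ref{thm:regularity_linear}, and the $L^2$ bound via a standard Aubin--Nitsche duality argument combined with that same regularity result (applied with $r=0$, giving $\beta=\min\{s,1/2-\varepsilon\}$). The paper only sketches these two steps in a couple of sentences; your write-up supplies the same argument in detail.
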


\begin{proof}
The proof for the estimates in the energy norm~\eqref{eq:convergence_ratesHS} is directly obtained by combining~\eqref{best_approx},~\eqref{eq:interpol}, and the regularity estimates~\eqref{nonl_regularity}. The convergence estimate in the $L^2$-norm~\eqref{eq:convergence_ratesL2} is obtained by applying a standard Aubin-Nitsche duality argument together with the regularity results~\eqref{nonl_regularity}.
\end{proof}


\section{Nonlocal variational inequality}\label{sec:vi}

To derive the variational formulation of the obstacle problem~\eqref{nonl_obstacle} we introduce a set $\mathcal{K}$ of admissible solutions 
\begin{equation*}
\mathcal{K}:=\{u\in\VD\colon\; u\geq\psi\;\mbox{on }\D\}, 
\end{equation*}
for all $\psi\in \XD$ and assume that {$\psi(\x)\leq 0$ on $\DI$}. It is easy to see that $\mathcal{K}$ is closed, convex, and non-empty. Then, a weak formulation of~\eqref{nonl_obstacle} leads to the following  variational inequality problem: find $u\in\mathcal{K}$ such that for a given $f\in\VDd$ it holds
\begin{equation}
a(u,v-u)\geq\dual{f,v-u},\quad\forall v\in 
\mathcal{K}.\label{eq:vi}
\end{equation}
The problem admits a unique solution; see~\cite{glowinski,guan2017}.

By means of Lagrange multipliers, we restate the problem~\eqref{eq:vi} in an equivalent mixed formulation or saddle point form. 
Let the space $W:=\VDd$ and define a bilinear form $b:W\times\V\to\mathbb{R}$ as a duality pairing $b(\eta,v):=\dual{\eta,v}$. In $W$, we define the set $M$ (referred to as a the dual cone) as
\begin{equation}
M:=\{\eta\in W\colon\; b(\eta,v)\geq 0,\;v\in\VD, \;v\geq 
0\}.\label{eq:cone}
\end{equation}
Obviously, the bilinear form $b(\eta,v)$ is bounded and inf-sup stable on $W\times\V$, i.e.,
\begin{equation}
\beta=\inf_{\eta\in W,\eta\neq 0}\sup_{v\in\V, v\neq 
0}\frac{b(\eta,v)}{\norm{\eta}_W\normc{v}}\geq\beta_0>0.
\end{equation}
In the present setting, we have $\beta=1$. Then, the set $\mathcal{K}$ can be expressed equivalently as $\mathcal{K}=\{v\in\V\colon\; b(\eta,v)\geq b(\eta,\psi),\; \forall\eta\in 
M\}$, and we arrive in an equivalent saddle point formulation (see, e.g.,~\cite{kikuchi}):
find $u\in\VD$ and $\lambda\in M$ such that
\begin{subequations}
\begin{align}
a(u,v)-b(\lambda,v)&=\dual{f,v}, &&\forall v\in\VD,\\
b(\eta-\lambda,u)&\geq b(\eta-\lambda,\psi),&&\forall\eta\in M. 
\end{align}\label{eq:sp}\end{subequations}

\subsection{Variational inequalities with the fractional 
Laplacian}\label{subsec:vi_fl}

It is easy to see, that, similarly as in the linear case, for $\ker$ defined as in {Case~1} with $\sigma(\x,\xx)={c_{n,s}/2}$ in~\eqref{kernel_case1}, the solution of the nonlocal variational inequality~\eqref{eq:sp} converges to the solution of the variational inequality 
with the fractional Laplace operator as $\delta\to\infty$. Let
\begin{equation}
 M_{\infty}:=\{\eta\in H^{-s}_\D(\R^n)\colon\; b(\eta,v)\geq 0,\;v\in 
H_\D^s(\R^n), \;v\geq 
0\},
\end{equation}
where ${b(\cdot,\cdot)}$ is defined as the extended $L_{\D}^2(\R^n)$ duality pairing on $H^{-s}_\D(\R^n)\times H^{s}_\D(\R^n)$. Then, we consider the following variational inequality problem corresponding to the fractional Laplace operator~\eqref{FL_operator}: for $0<s<1$ find $u\in 
H^{s}_\D(\R^n)$ and $\lambda\in M_{\infty}$ such that
\begin{subequations}
\begin{align}
a_{\infty}(u,v)-b(\lambda,v)&=\dual{f,v}, &&\forall v\in 
H^{s}_\D(\R^n),\\
b(\eta-\lambda,u)&\geq 
b(\eta-\lambda,\psi),&&\forall\eta\in M_{\infty}, \label{eq:sp_fl_2}
\end{align}\label{eq:sp_fl}\end{subequations}
where $a_{\infty}(\cdot,\cdot)$ is defined in~\eqref{eq:bil_a_inf} with $\kernel_{\infty}$ defined in~\eqref{eq:kernel_fractional}, and {$\psi\in H^{s}(\R^n)$, {$\psi\leq 0$ on $\R^n\setminus\D$}.
Because the bilinear form $a(\cdot,\cdot)$ (or $a_{\infty}(\cdot,\cdot)$) defines an inner product on $\V$ (or $H^s_\D(\R^n)$), and $b(\cdot,\cdot)$ is inf-sup stable, the problem~\eqref{eq:sp} (or~\eqref{eq:sp_fl}) admits a unique solution. 

\begin{proposition}\label{prop:LP_VI_nonlocal}
Let $(u_{\infty},\lambda_{\infty})\in H_\D^s(\R^n)\times M_{\infty}$ be a solution pair of the variational inequality with the fractional Laplace operator~\eqref{eq:sp_fl}, and let $(u_{\delta},\lambda_{\delta})\in\V\times M$ denote the solution of the nonlocal variational inequality~\eqref{eq:sp} with $\ker$ satisfying~\eqref{kernel_prop1} and defined in~\eqref{kernel_case1} with $\sigma(\x,\xx)={c_{n,s}/2}$ and $\delta>\diam{|\D|}$. Then, 
\begin{subequations}
 \begin{align}
\norm{u_{\infty}-u_{\delta}}_{H_\D^s(\R^n)}\leq\frac{C_P}{C_3}C(\delta,n)\norm{
u_ { \infty}}_{L^2(\D)},
 \end{align}\end{subequations}
where the constants $C_P$, $C_3$, and $C(\delta,n)$ are defined in~\eqref{poincare},~\eqref{spaces_equiv_Rn}, and~\eqref{eq:equivalence_constant1}, respectively. Here, $C(\delta,n)\sim{\delta^{-2s}}$ and $C(\delta,n)\to 0$ as $\delta\to\infty$. 
\end{proposition}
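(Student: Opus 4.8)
The plan is to exploit Proposition~\ref{prop:LP_nonlocal}, which expresses $A_\infty = A_\delta + C(\delta,n)\,I_{L^2(\D)}$ on $\V$ for $\delta>\diam|\D|$. Substituting this identity into the first equation of the fractional Laplace saddle-point system~\eqref{eq:sp_fl}, the pair $(u_\infty,\lambda_\infty)$ satisfies
\begin{equation*}
a(u_\infty,v) - b(\lambda_\infty,v) = \dual{f,v} - C(\delta,n)(u_\infty,v)_{L^2(\D)}, \qquad \forall v\in\V,
\end{equation*}
while the variational inequality part~\eqref{eq:sp_fl_2} still holds for $\eta\in M_\infty$; note that, since $\delta>\diam|\D|$ forces $\V = H^s_\D(\R^n)$ (and $M = M_\infty$) for the Case~1 kernel under consideration, the two feasible sets coincide. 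Thus $u_\infty$ solves the \emph{nonlocal} variational inequality with right-hand side $f - C(\delta,n)u_\infty$, whereas $u_\delta$ solves it with right-hand side $f$; both are constrained to the same convex set $\mathcal{K}$.

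The second step is the standard Lipschitz-dependence estimate for variational inequalities on a fixed convex set. Testing the variational inequality for $u_\delta$ (data $f$) with $v=u_\infty$ and the one for $u_\infty$ (data $f-C(\delta,n)u_\infty$) with $v=u_\delta$, adding the two inequalities, and using $a(v,v) = \normc{v}^2$, I obtain
\begin{equation*}
\normc{u_\infty - u_\delta}^2 \leq C(\delta,n)\,(u_\infty, u_\delta - u_\infty)_{L^2(\D)} \leq C(\delta,n)\,\norm{u_\infty}_{L^2(\D)}\,\norm{u_\delta - u_\infty}_{L^2(\D)}.
\end{equation*}
Applying the nonlocal Poincaré inequality~\eqref{poincare} to bound $\norm{u_\delta-u_\infty}_{L^2(\D)} \leq C_P\normc{u_\delta-u_\infty}$ and cancelling one factor of $\normc{u_\infty-u_\delta}$ gives $\normc{u_\infty-u_\delta} \leq C_P\,C(\delta,n)\,\norm{u_\infty}_{L^2(\D)}$. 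Finally, the lower norm equivalence~\eqref{spaces_equiv_Rn}, $C_3\norm{v}_{H^s_\D(\R^n)} \leq \normc{v}$, converts this into the claimed bound $\norm{u_\infty-u_\delta}_{H^s_\D(\R^n)} \leq \tfrac{C_P}{C_3}C(\delta,n)\norm{u_\infty}_{L^2(\D)}$. The decay $C(\delta,n)\sim\delta^{-2s}\to 0$ is read off directly from the explicit formula~\eqref{eq:equivalence_constant1}.

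The only genuinely delicate point is the bookkeeping in the first step: one must check that $M_\infty$ and $M$ really are the same set and that the $L^2_\D$ duality pairing used to define $b$ in the two formulations agrees, so that $u_\infty$ is admissible as a competitor in the nonlocal inequality and vice versa. This is where the hypothesis $\delta>\diam|\D|$ is used — it guarantees $\V = H^s_\D(\R^n)$ for the Case~1 kernel and that the extra nonlocal term produced by truncation is exactly the local mass term $C(\delta,n)(u,v)_{L^2(\D)}$ with no leftover cross-term, precisely as established in Proposition~\ref{prop:LP_nonlocal}. Once that identification is in place, the rest is the routine VI-perturbation argument sketched above; no regularity of $u_\infty$ beyond membership in $H^s_\D(\R^n)$ is needed, which is what makes the estimate clean.
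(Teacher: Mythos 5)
Your proposal is correct and follows essentially the same route as the paper: both hinge on Proposition~\ref{prop:LP_nonlocal} to turn the fractional-Laplace problem into the nonlocal one with the perturbed datum $f-C(\delta,n)u_{\infty}$, both use the identification of the constraint sets ($M=M_{\infty}$, same $\mathcal{K}$), and both finish with the Poincar\'e inequality \eqref{poincare} and the norm equivalence \eqref{spaces_equiv_Rn}. The only (equivalent) packaging difference is that you obtain the sign of the constraint term by testing the two primal variational inequalities with swapped competitors, whereas the paper subtracts the saddle-point equations of \eqref{eq:sp} and \eqref{eq:sp_fl} and uses the complementarity relations to show $b(\lambda_{\infty}-\lambda_{\delta},u_{\infty}-u_{\delta})\leq 0$; note also that for Case~1 kernels $\V$ is equivalent to $H^s_\D(\R^n)$ for every $\delta>0$ by \eqref{spaces_equiv_Rn}, so the hypothesis $\delta>\diam|\D|$ is needed only to make the truncation error exactly the mass term, not for the identification of the spaces.
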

\begin{proof}
From Proposition~\ref{prop:LP_nonlocal} we obtain that for all $v\in 
H_\D^s(\R^n)$
\begin{align*}
a(u_{\infty}-u_{\delta},v)+C(\delta,n)(u_{\infty},v)_{L^2(\D)}-b(\lambda_{\infty
}-\lambda_{\delta},v)=0.
\end{align*}
Setting $v:=u_{\infty}-u_{\delta}$ and using Cauchy-Schwarz inequality we obtain
\begin{align}
 \norm{u_{\infty}-u_{\delta}}_\V^2\leq 
C(\delta,n)\norm{u_{\infty}}_{L^2(\D)}\norm{u_{\infty}-u_{\delta}}_{L^2(\D)}
+b(\lambda_{\infty
}-\lambda_{\delta},u_{\infty}-u_{\delta}).\label{eq:vi_proof_1}
\end{align}
We note that the inequality constraint~\eqref{eq:sp_fl_2} can be expressed as
\begin{equation*}
 b(\lambda_{\infty},u_{\infty})=b(\lambda_{\infty},\psi),\quad\mbox{and }\quad
 b(\eta,u_{\infty})\geq b(\eta,\psi),\quad\forall\eta\in M_{\infty}.
\end{equation*}
This property is obtained by taking $\eta=0$ and $\eta=2\lambda_{\infty}$ in~\eqref{eq:sp_fl_2}. Analogous properties hold for the solution pair $(u_{\delta},\lambda_{\delta})\in\V\times M$ of~\eqref{eq:sp}. Then, taking into account the last properties and the equivalence of the dual cones $M_{\infty}$ and $M$ (in the sense, that for any $\eta\in M_{\infty}$ we have $\eta\in M$ and vice versa), we can estimate
\begin{align*}
 b(\lambda_{\infty}-\lambda_{\delta},u_{\infty}-u_{\delta})\leq 0.
\end{align*}
Combining the last estimate and using~\eqref{poincare}, the estimate~\eqref{eq:vi_proof_1} becomes
\begin{align*}
\norm{u_{\infty}-u_{\delta}}_\V\leq 
C_PC(\delta,n)\norm{u_{\infty}}_{L^2(\D)}.
\end{align*}
Finally, using the norm equivalence~\eqref{spaces_equiv_Rn} we obtain the necessary result.
\end{proof}

\begin{remark}
{We note that when $\psi=-\infty$, $\lambda_{\delta}=0$ and $\lambda_{\infty}=0$, and the variational inequality problem~\eqref{eq:sp} or~\eqref{eq:sp_fl} reduces to the linear problem~\eqref{nonl_linear_var} or~\eqref{FL_linear_var}, respectively. 
Then, the error estimates in Proposition~\ref{prop:LP_VI_nonlocal} also recover the error estimates, derived in~\cite{DELIAlaplacian} for the solutions of the corresponding linear problems. In fact, for both linear and variational inequality problems we obtain the same estimate and $C(\delta,n)\to 0$ as $\delta\to\infty$ at a rate $2s$.}
\end{remark}

\subsection{Regularity study for nonlocal variational inequalities}\label{subsec:vi_reg}

In the previous section, we posed a variational inequality problem~\eqref{eq:sp} under 
low regularity assumptions on the Lagrange multiplier $\lambda\in W=\Vd$. Now, we demonstrate that for a general class of kernels $\ker$
satisfying~\eqref{kernel_prop1}, and with sufficiently regular $\psi$ and $f$, we have that $\lambda\in L^2(\D)$. 

Moreover, for a class of kernels corresponding to the fractional Laplacian, we also prove an improved regularity result for the primal solution $u$ of~\eqref{eq:sp} by extending the improved regularity result for the solution of the linear fractional Laplace problem. We derive the regularity estimates for the nonlocal variational inequality~\eqref{eq:sp} by generalizing the Levy-Stampacchia estimates for local variational inequalities; see~\cite{kinderlehrer80}. {We note that similar results have been established for the fractional Laplacian in~\cite{servadei}, cf. also~\cite{musina2017}.  However, the case of truncated kernel is not covered by these analyses and the additional assumption $n>2s$ is imposed. Here, we give an independent derivation of the results for a general class of truncated kernels $\ker$. } 

First, we state some results from the theory of monotone operators that are useful for our analyses. 

\begin{lemma}[Minty Lemma,~{\cite[Chapter \textrm{III}, Lemma 
1.5]{kinderlehrer80}}]\label{lemma:minty}
Let $\mathcal{X}$ be a reflexive Banach space and $\mathbb{K}\subset\mathcal{X}$ be a closed convex set, and $A:\mathbb{K}\to\mathcal{X}^{\prime}$ be monotone and continuous on finite-dimensional subspaces. Then, $u\in\mathbb{K}$ satisfies 
$\duall{Au,v-u}_{\mathcal{X}^{\prime}\times\mathcal{X}}\geq 
0$, $\forall v\in\mathbb{K}$,
if and only if $u\in\mathbb{K}$ satisfies
$\duall{Av,v-u}_{\mathcal{X}^{\prime}\times\mathcal{X}}\geq 0$, $\forall v\in\mathbb{K}$.
\end{lemma}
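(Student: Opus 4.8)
The plan is to prove the two implications separately. The direction ``$\Rightarrow$'' is immediate from monotonicity, while the converse is the classical Minty trick, exploiting the convexity of $\mathbb{K}$ together with the hemicontinuity hypothesis.

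First I would dispatch ``$\Rightarrow$''. Suppose $u\in\mathbb{K}$ satisfies $\duall{Au,v-u}_{\mathcal{X}^{\prime}\times\mathcal{X}}\geq 0$ for all $v\in\mathbb{K}$. Fix $v\in\mathbb{K}$. Monotonicity of $A$ gives $\duall{Av-Au,v-u}_{\mathcal{X}^{\prime}\times\mathcal{X}}\geq 0$, hence $\duall{Av,v-u}_{\mathcal{X}^{\prime}\times\mathcal{X}}\geq\duall{Au,v-u}_{\mathcal{X}^{\prime}\times\mathcal{X}}\geq 0$. This costs two lines and uses only monotonicity.

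For the converse, suppose $u\in\mathbb{K}$ satisfies $\duall{Av,v-u}_{\mathcal{X}^{\prime}\times\mathcal{X}}\geq 0$ for all $v\in\mathbb{K}$. Fix an arbitrary $w\in\mathbb{K}$ and, for $t\in(0,1]$, set $v_t:=(1-t)u+tw$, which lies in $\mathbb{K}$ by convexity. Testing the hypothesis at $v=v_t$ and using $v_t-u=t(w-u)$ yields $t\,\duall{Av_t,w-u}_{\mathcal{X}^{\prime}\times\mathcal{X}}\geq 0$, hence $\duall{Av_t,w-u}_{\mathcal{X}^{\prime}\times\mathcal{X}}\geq 0$ for every $t\in(0,1]$. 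Letting $t\to 0^+$, the points $v_t$ run along the one-dimensional segment $[u,w]$, so by continuity of $A$ on finite-dimensional subspaces we get $\duall{Av_t,w-u}_{\mathcal{X}^{\prime}\times\mathcal{X}}\to\duall{Au,w-u}_{\mathcal{X}^{\prime}\times\mathcal{X}}$, whence $\duall{Au,w-u}_{\mathcal{X}^{\prime}\times\mathcal{X}}\geq 0$. Since $w\in\mathbb{K}$ was arbitrary, $u$ solves the original variational inequality.

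The only delicate point, and the step I would treat with care, is the passage to the limit $t\to 0^+$: one must invoke the hypothesis ``continuous on finite-dimensional subspaces'' in the precise sense that for fixed $w$ the scalar map $t\mapsto\duall{Av_t,w-u}_{\mathcal{X}^{\prime}\times\mathcal{X}}$ is continuous on $[0,1]$ (equivalently, $A$ restricted to the segment $[u,w]$ is continuous into $\mathcal{X}^{\prime}$ with at least the weak-$*$ topology). Note that reflexivity of $\mathcal{X}$ plays no role in this equivalence; it enters only when the lemma is subsequently used to establish existence of a solution, so I would not invoke it in the argument above.
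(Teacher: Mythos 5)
Your proof is correct: the forward implication via monotonicity and the converse via the convexity trick $v_t=(1-t)u+tw$, division by $t$, and hemicontinuity along the segment is exactly the classical Minty argument, and your remark that reflexivity is not needed for the equivalence itself is also accurate. The paper does not prove this lemma but cites it from Kinderlehrer--Stampacchia (Chapter III, Lemma 1.5), whose proof is essentially the one you give, so there is nothing to reconcile.
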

In what follows, we assume that
\begin{equation}
f\in L^2(\D)\quad\mbox{and }\;
\maxx{-\L\psi-f}{\big|_\D}\in 
L^2(\D), \label{eq:f_psi}
\end{equation}
where $\maxx{\cdot}:=\max(\cdot,0)$, and, as before, $-\L$ is defined in~\eqref{nonl_operator} with kernels $\ker$ satisfying~\eqref{kernel_prop1}, and defined in either {Case~1}, {Case~2}, or {Case~3}. To derive a regularity result, we study an approximation of the variational inequality by a penalized problem. For any $\varepsilon>0$, introduce a sequence of penalty functions $\mathcal{H}_{\varepsilon}:\R\to[0,1]$, which are assumed to be uniformly Lipschitz continuous, non-increasing, and bounded, {$0\leq\mathcal{H}_{\varepsilon}(t)\leq 1$}. In particular, we can specify
\[
 \mathcal{H}_{\varepsilon}(t)=\begin{cases}
                    1 &\mbox{for }\; t\leq 0,\\
                    1-{t}/{\varepsilon}  &\mbox{for }\; 0\leq t\leq\varepsilon,\\
                    0 &\mbox{for }\; t\geq\varepsilon,
                   \end{cases}
\]
and consider the following penalized problem
\begin{subequations}
 \begin{align}
 -\L 
u_{\varepsilon}&= 
\maxx{-\L\psi-f}\mathcal{H}_{\varepsilon}(u_{\varepsilon}-\psi)+f &&\mbox{in 
} 
\D,\\
u_{\varepsilon}&= 0 &&\mbox{on } \D_I.
\end{align}\label{eq:penalized_vi}\end{subequations}
In a variational form, the above problem reads as follows: for a fixed $\varepsilon>0$, find $u_{\varepsilon}\in\VD$, such that for all $v\in\VD$
\begin{align} 
a(u_{\varepsilon},v)-\int_{\DD}\left(\maxx{-\L\psi-f}\mathcal{H}_{\varepsilon}
(u_{\varepsilon}-\psi)\right)v\d\x=\int_{\D}fv\d\x.\label{eq:var_penalized_vi}
\end{align}
Using the theory of nonlinear monotone operators, one obtains the existence and uniqueness of solutions of \eqref{eq:var_penalized_vi}.

\begin{lemma}\label{lemma:penalized}
Let  \eqref{eq:f_psi} be satisfied. Then, for a fixed $\varepsilon>0$, there exists a unique solution $u_{\varepsilon}\in\VD$ of the penalized problem~\eqref{eq:var_penalized_vi}. Moreover, there exists a positive constant $C>0$ such that
\begin{equation}
 \norm{u_{\varepsilon}}_{\VD}\leq  C\left(\norm{f}_{L^2(\D)}+\norm{\maxx{-\L\psi+f}}_{L^2(\D)}\right).\label{bound_f_penalized}
\end{equation}
\end{lemma}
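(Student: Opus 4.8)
The plan is to apply a standard surjectivity result for nonlinear monotone operators — specifically the Browder–Minty theorem (equivalently, Lemma~\ref{lemma:minty} combined with coercivity) — to the operator
\[
T_\varepsilon : \VD \to \VDd, \qquad \dual{T_\varepsilon u, v} := a(u,v) - \int_{\DD}\maxx{-\L\psi-f}\,\mathcal{H}_\varepsilon(u-\psi)\,v\,\d\x,
\]
and then identify the solution of $T_\varepsilon u_\varepsilon = f$ with the desired $u_\varepsilon$. First I would verify that $T_\varepsilon$ is well defined: since $\maxx{-\L\psi-f}\in L^2(\D)$ by assumption~\eqref{eq:f_psi}, $0\le\mathcal{H}_\varepsilon\le 1$, and $v\in\VD\hookrightarrow L^2(\DD)$ by the nonlocal Poincaré inequality~\eqref{poincare}, the nonlinear term is a bounded functional on $\VD$; together with continuity of $a(\cdot,\cdot)$ this gives $T_\varepsilon u\in\VDd$ with $\norm{T_\varepsilon u}_{\VDd}$ controlled. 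Next I would check \emph{monotonicity}: the principal part contributes $a(u-w,u-w)=\norm{u-w}_\VD^2\ge 0$, and the penalty part contributes
\[
-\int_{\DD}\maxx{-\L\psi-f}\bigl(\mathcal{H}_\varepsilon(u-\psi)-\mathcal{H}_\varepsilon(w-\psi)\bigr)(u-w)\,\d\x \ge 0,
\]
which is nonnegative pointwise because $\mathcal{H}_\varepsilon$ is non-increasing (so the bracket has the opposite sign to $u-w$) and $\maxx{-\L\psi-f}\ge 0$; hence $T_\varepsilon$ is monotone. Continuity on finite-dimensional subspaces (hemicontinuity) follows from the Lipschitz continuity of $\mathcal{H}_\varepsilon$ and dominated convergence. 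Finally, \emph{coercivity}: using $0\le\mathcal{H}_\varepsilon\le 1$ and Cauchy–Schwarz with~\eqref{poincare},
\[
\dual{T_\varepsilon u, u} \ge \norm{u}_\VD^2 - \norm{\maxx{-\L\psi-f}}_{L^2(\D)}\,C_P\norm{u}_\VD,
\]
so $\dual{T_\varepsilon u,u}/\norm{u}_\VD\to\infty$. Browder–Minty then yields existence of $u_\varepsilon\in\VD$ with $T_\varepsilon u_\varepsilon = f$ in $\VDd$, i.e.~\eqref{eq:var_penalized_vi}; uniqueness follows from strict monotonicity of the principal part (if $T_\varepsilon u = T_\varepsilon w$ then testing the difference with $u-w$ forces $\norm{u-w}_\VD=0$, since the penalty difference is $\ge 0$).

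For the \emph{a priori bound}~\eqref{bound_f_penalized}, I would test~\eqref{eq:var_penalized_vi} with $v=u_\varepsilon$ to obtain
\[
\norm{u_\varepsilon}_\VD^2 = \int_{\D} f u_\varepsilon\,\d\x + \int_{\DD}\maxx{-\L\psi-f}\,\mathcal{H}_\varepsilon(u_\varepsilon-\psi)\,u_\varepsilon\,\d\x \le \Bigl(\norm{f}_{L^2(\D)}+\norm{\maxx{-\L\psi-f}}_{L^2(\D)}\Bigr)\norm{u_\varepsilon}_{L^2(\DD)},
\]
using $0\le\mathcal{H}_\varepsilon\le 1$ and Cauchy–Schwarz; then~\eqref{poincare} absorbs $\norm{u_\varepsilon}_{L^2(\DD)}\le C_P\norm{u_\varepsilon}_\VD$, giving~\eqref{bound_f_penalized} with $C=C_P$ (the sign discrepancy between $-\L\psi-f$ in the statement and $-\L\psi+f$ on the right-hand side is immaterial since $f\in L^2(\D)$ anyway, or one simply carries $\maxx{-\L\psi-f}$ throughout).

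The main obstacle is a technical, not conceptual, one: making rigorous sense of the term $\maxx{-\L\psi-f}$ when $\psi$ is only assumed to lie in $\XD$, so that $-\L\psi$ need not be a function a priori. This is precisely what assumption~\eqref{eq:f_psi} finesses — it postulates that the distribution $-\L\psi - f$ restricted to $\D$ has a nonnegative part belonging to $L^2(\D)$, which is all that is needed for every estimate above. Once that object is taken as a fixed element of $L^2(\D)$, the rest is the routine monotone-operator machinery sketched here. I would also remark that the specific piecewise-linear choice of $\mathcal{H}_\varepsilon$ plays no role in this lemma; only the three structural properties (values in $[0,1]$, non-increasing, Lipschitz) are used, so the proof is written for that general class.
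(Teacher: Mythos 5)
Your proposal is correct and follows essentially the same route as the paper: define the nonlinear penalized operator, verify strict monotonicity (via the non-increasing $\mathcal{H}_\varepsilon$ and nonnegativity of $\maxx{-\L\psi-f}$), coercivity, and continuity on finite-dimensional subspaces, invoke the monotone-operator existence theorem, and obtain \eqref{bound_f_penalized} by testing with $u_\varepsilon$ together with the Poincar\'e inequality \eqref{poincare}. Your observation that the $\maxx{-\L\psi+f}$ in \eqref{bound_f_penalized} is a harmless sign slip relative to $\maxx{-\L\psi-f}$ is also consistent with the paper's argument.
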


\begin{proof}
The proof follows the lines, e.g., of \cite[Lemma 2.2]{kinderlehrer80}; we present it here for completeness purposes. First, noting that $\mathcal{H}_{\varepsilon}(u_{\varepsilon}-\psi)\in L^{\infty}(\DD)$, $\varepsilon>0$, we define a nonlinear operator $\LL:\VD\to\VDd$ by
\begin{equation}
 \dual{\LL 
u_{\varepsilon},v}:=a(u_{\varepsilon},v)-\int_{\DD}(\maxx{-\L\psi-f}\mathcal{H}
_{\varepsilon}(u_{\varepsilon}-\psi))v\d\x.\label{nonlinear_oper}
\end{equation}
It is easy to see that $\LL$ is strictly monotone and coercive on $\VD$. Indeed, taking into account that $\mathcal{H}_{\varepsilon}$ is non-increasing, we obtain that
\begin{multline*}
 \dual{\LL u_{\varepsilon}-\LL 
v,u_{\varepsilon}-v}=
a(u_{\varepsilon}-v,u_{\varepsilon}-v)\\
-\int_{\DD}\maxx{-\L\psi-f}(\mathcal{H}_{\varepsilon}
(u_{\varepsilon}-\psi)-\mathcal{H}_{\varepsilon}(v-\psi))(u_{\varepsilon}
-v)\d\x\geq\norm{u_{\varepsilon}-v}_{\V}^2, 
\end{multline*}
which shows that $\LL$ is strictly monotone and coercive. For a fixed $\varepsilon>0$, let there exists a sequence $u_{\varepsilon}^n$ such that $u_{\varepsilon}^n\to u_{\varepsilon}$ strongly in $\VD$. Then, $\LL u^n_{\varepsilon}\rightharpoonup\LL u_{\varepsilon}$ weakly in $\VDd$, which also implies that $\LL$ is continuous on finite-dimensional subspaces of $\VD$; {see~\cite[Definition 1.2]{kinderlehrer80}}. Then, {by existence results for monotone operators~\cite[Chapter~\textrm{III}, Corollary~1.8]{kinderlehrer80}}, and the strict monotonicity of $\LL$, we obtain the existence and uniqueness of $u_{\varepsilon}$ of\eqref{eq:var_penalized_vi} for a fixed $\varepsilon>0$. Then, the bound~\eqref{bound_f_penalized} directly follows from~\eqref{bound_f}, \eqref{poincare}, and~\eqref{eq:f_psi}. 
\end{proof}

Next, under mild assumptions on the data, we state the main regularity result for a nonlocal variational inequality.

\begin{theorem}[Regularity of the Lagrange multiplier]\label{thm:regularity_lagrange}
 Let $f$ and $\psi$ be such that the condition~\eqref{eq:f_psi} holds, and let
$(u,\lambda)\in\VD\times M$ be the unique solution pair of~\eqref{eq:sp} for the kernels $\ker$, satisfying~\eqref{kernel_prop1}  and defined in either {\it Case~1}, {\it Case~2}, or {\it Case~3}. Then, $\L u\in L^2(\D)$, $\lambda\in L^2(\D)$, and $\lambda\leq\maxx{-\L\psi-f}$. 
\end{theorem}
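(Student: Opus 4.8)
The plan is to pass to the limit in the penalized problem~\eqref{eq:var_penalized_vi} as $\varepsilon\to 0$ and identify the limit with the solution of~\eqref{eq:sp}, obtaining along the way the Lewy--Stampacchia-type bound on the Lagrange multiplier. First I would use Lemma~\ref{lemma:penalized}: the bound~\eqref{bound_f_penalized} is uniform in $\varepsilon$, so $(u_{\varepsilon})$ is bounded in $\VD$, and since $\VD$ is a Hilbert space we can extract a weakly convergent subsequence $u_{\varepsilon}\rightharpoonup u_*$ in $\VD$, with strong convergence in $L^2(\DD)$ by the compactness implicit in the nonlocal Poincar\'e inequality~\eqref{poincare} (for Case~1; for Cases~2 and~3 one argues directly since $\VD\hookrightarrow L^2_\D(\DD)$). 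Set $\lambda_{\varepsilon}:=\maxx{-\L\psi-f}\mathcal{H}_{\varepsilon}(u_{\varepsilon}-\psi)\in L^2(\D)$; by the pointwise bound $0\le\mathcal{H}_{\varepsilon}\le 1$ and~\eqref{eq:f_psi} the sequence $(\lambda_{\varepsilon})$ is bounded in $L^2(\D)$, so passing to a further subsequence $\lambda_{\varepsilon}\rightharpoonup\lambda_*$ weakly in $L^2(\D)$, and clearly $0\le\lambda_*\le\maxx{-\L\psi-f}$ a.e.\ since the bound is preserved under weak limits in $L^2$. Passing to the limit in~\eqref{eq:var_penalized_vi} gives $a(u_*,v)-\int_{\D}\lambda_* v\d\x=\int_{\D}fv\d\x$ for all $v\in\VD$, i.e.\ $-\L u_*=f+\lambda_*$ in the sense of $\VDd$, and since $f+\lambda_*\in L^2(\D)$ this already shows $\L u_*\in L^2(\D)$.

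The next step is to verify the complementarity structure, namely that $u_*\in\mathcal{K}$ (so $u_*\ge\psi$), that $\lambda_*\in M$, and that $b(\lambda_*,u_*-\psi)=0$; together with the equation these characterize $(u_*,\lambda_*)$ as the unique solution of~\eqref{eq:sp}, hence $(u_*,\lambda_*)=(u,\lambda)$. That $\lambda_*\ge 0$ gives $\lambda_*\in M$ directly from the definition~\eqref{eq:cone}. For $u_*\ge\psi$: on the set $\{u_{\varepsilon}-\psi\ge\varepsilon\}$ one has $\mathcal{H}_{\varepsilon}(u_{\varepsilon}-\psi)=0$; testing~\eqref{eq:var_penalized_vi} with $v=\maxx{\psi-u_{\varepsilon}}\in\VD$ (which uses $\psi\le 0$ on $\DI$ so this test function vanishes on $\DI$, together with $\psi\in\XD$) and using that $\maxx{-\L\psi-f}\ge 0$ while $\mathcal{H}_{\varepsilon}=1$ where $\psi-u_{\varepsilon}>0$, one obtains $a(u_{\varepsilon}-\psi,\maxx{\psi-u_{\varepsilon}})\le\int_{\D}(\L\psi+f)\maxx{\psi-u_{\varepsilon}}\d\x+\int_\D\maxx{-\L\psi-f}\maxx{\psi-u_\varepsilon}\d\x$; combining terms and using $-a(w,\maxx{-w})\ge\norm{\maxx{-w}}_\V^2$ (or the analogous sign identity for the nonlocal form) forces $\norm{\maxx{\psi-u_{\varepsilon}}}_\V\to 0$, hence $u_*\ge\psi$ a.e.\ on $\D$ in the limit. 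Finally, for the complementarity $b(\lambda_*,u_*-\psi)=0$: since $\lambda_*\ge 0$ and $u_*\ge\psi$ the pairing is $\ge 0$; for the reverse inequality, note $\lambda_{\varepsilon}(u_{\varepsilon}-\psi)=\maxx{-\L\psi-f}\mathcal{H}_{\varepsilon}(u_{\varepsilon}-\psi)(u_{\varepsilon}-\psi)\le\varepsilon\maxx{-\L\psi-f}$ pointwise (because $\mathcal{H}_{\varepsilon}(t)t\le\varepsilon$ for $t\ge0$ and $\mathcal{H}_{\varepsilon}(t)t\le 0\le\varepsilon$ for $t\le 0$), so $\int_{\D}\lambda_{\varepsilon}(u_{\varepsilon}-\psi)\d\x\le\varepsilon\norm{\maxx{-\L\psi-f}}_{L^1(\D)}\to 0$; passing to the limit using $\lambda_{\varepsilon}\rightharpoonup\lambda_*$ weakly and $u_{\varepsilon}\to u_*$ strongly in $L^2$ yields $\int_{\D}\lambda_*(u_*-\psi)\d\x\le 0$, whence equality.

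The main obstacle I anticipate is the limit passage in the nonlinear penalty term, specifically justifying $\int_{\DD}\lambda_{\varepsilon}v\d\x\to\int_{\D}\lambda_* v\d\x$ and correctly identifying $\lambda_*$: we only have weak $L^2$ convergence of $\lambda_{\varepsilon}=\maxx{-\L\psi-f}\mathcal{H}_{\varepsilon}(u_{\varepsilon}-\psi)$ and, while $u_{\varepsilon}\to u_*$ strongly in $L^2$ so $u_{\varepsilon}-\psi\to u_*-\psi$ a.e.\ along a subsequence, the function $\mathcal{H}_{\varepsilon}$ also changes with $\varepsilon$, so the a.e.\ limit of $\mathcal{H}_{\varepsilon}(u_{\varepsilon}-\psi)$ is not simply a fixed Heaviside composed with the limit — it equals $1$ where $u_*-\psi<0$, lies in $[0,1]$ where $u_*=\psi$, and equals $0$ where $u_*-\psi>0$. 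The cleanest resolution, and the one I would adopt, is to avoid identifying $\mathcal{H}_{\varepsilon}(u_{\varepsilon}-\psi)$ explicitly: simply call the weak $L^2$-limit $\lambda_*$, record the three structural facts ($0\le\lambda_*\le\maxx{-\L\psi-f}$, $\lambda_*\ge 0$, and the complementarity from the $\varepsilon$-estimate above), pass to the limit in the \emph{linear-in-$v$} identity~\eqref{eq:var_penalized_vi}, and then invoke uniqueness of the saddle-point solution of~\eqref{eq:sp} to conclude $(u_*,\lambda_*)=(u,\lambda)$ and thereby transfer all the regularity to the genuine multiplier. This sidesteps the delicate characterization of the pointwise limit entirely and relies only on the well-posedness of~\eqref{eq:sp} established earlier.
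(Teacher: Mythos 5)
Your overall strategy is legitimate and differs from the paper's in how the limit is identified: you pass to the limit in the linear-in-$v$ identity \eqref{eq:var_penalized_vi}, verify $u_*\in\mathcal{K}$, $\lambda_*\geq 0$ and complementarity, and invoke uniqueness of the saddle point \eqref{eq:sp}, whereas the paper first proves $u_{\varepsilon}\in\mathcal{K}$ exactly, identifies the weak limit as the solution of the variational inequality \eqref{eq:vi} via Minty's Lemma~\ref{lemma:minty} (with the barrier $\psi+\varepsilon_0 d$ to kill the penalty term), and only then reads off $\lambda$ as the simultaneous weak $\VDd$- and $L^2(\D)$-limit of $A_{\delta}u_{\varepsilon}-f$. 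The genuine gap in your version is the claimed strong $L^2(\DD)$ convergence of $u_{\varepsilon}$. For Case~1 it does follow from the compact embedding of $H^s_\D$ into $L^2$, but for Case~2 and Case~3 with $n>2$ the energy space $\V$ is \emph{equivalent} to $L^2_\D(\DD)$, and for Case~3 with $n=1$ it is merely a (non-Sobolev) subspace of $L^2(\DD)$; in none of these cases is the embedding compact, and your parenthetical ``one argues directly since $\VD\hookrightarrow L^2_\D(\DD)$'' is a non sequitur: a continuous embedding turns weak convergence in $\V$ into weak convergence in $L^2$, never into strong convergence. The only place you use strong convergence is the complementarity step, where the product of the two weakly convergent sequences $\lambda_{\varepsilon}$ and $u_{\varepsilon}-\psi$ must pass to the limit—precisely the step that fails without compactness. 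As written, your proof therefore covers only the Case~1 kernels, while the theorem covers all three classes.

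The gap is repairable without compactness. Write $\int_\D\lambda_*(u_*-\psi)\d\x=\lim_{\varepsilon\to0}\int_\D\lambda_{\varepsilon}(u_*-\psi)\d\x$ (weak $L^2$ convergence against the fixed function $u_*-\psi$), split $u_*-\psi=(u_{\varepsilon}-\psi)+(u_*-u_{\varepsilon})$, control the first contribution by your $\varepsilon$-bound, and for the second use the penalized equation with the admissible test function $u_*-u_{\varepsilon}\in\VD$: $\int_\D\lambda_{\varepsilon}(u_*-u_{\varepsilon})\d\x=a(u_{\varepsilon},u_*)-\norm{u_{\varepsilon}}^2_{\V}-\int_\D f(u_*-u_{\varepsilon})\d\x$, whose $\limsup$ is $\leq 0$ by weak lower semicontinuity of the $\V$-norm and weak $L^2$ convergence of $u_{\varepsilon}$; alternatively, follow the paper's Minty-lemma route, which only ever needs weak convergence. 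Two smaller points: in your argument for $u_*\geq\psi$ you derive an \emph{upper} bound on $a(u_{\varepsilon}-\psi,\maxx{\psi-u_{\varepsilon}})$ and then combine it with the other upper bound $a(u_{\varepsilon}-\psi,\maxx{\psi-u_{\varepsilon}})\leq-\norm{\maxx{\psi-u_{\varepsilon}}}_{\V}^2$, which proves nothing; you need the lower bound (indeed equality), which holds because $\mathcal{H}_{\varepsilon}(u_{\varepsilon}-\psi)=1$ on the support of $\maxx{\psi-u_{\varepsilon}}$, so the right-hand side equals $\int_\D\maxx{\L\psi+f}\maxx{\psi-u_{\varepsilon}}\d\x\geq 0$ and one concludes $\maxx{\psi-u_{\varepsilon}}=0$, i.e.\ $u_{\varepsilon}\in\mathcal{K}$ for every $\varepsilon$ (this is the same sign argument the paper carries out by splitting over the sets $\{u_{\varepsilon}\geq\psi\}$ and $\{u_{\varepsilon}<\psi\}$). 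Finally, your verification of the second inequality in \eqref{eq:sp} tacitly uses the characterization $\mathcal{K}=\{v\in\V\colon b(\eta,v)\geq b(\eta,\psi)\ \forall\eta\in M\}$ stated in the paper; it is worth citing it explicitly, since $u_*-\psi$ itself need not lie in $\VD$.
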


\begin{proof}
First, we verify that $u_{\epsilon}\in\mathcal{K}$. Set $\vt(\x)=u_{\varepsilon}(\x)-\max(u_{\varepsilon}(\x),\psi(\x))=-(\psi(\x)-u_{\varepsilon}(\x))^+\leq 0$. Because {$\psi(\x)\leq 0$ on $\D_I$}, we obtain that $\maxx{\psi-u_{\varepsilon}}\in\VD$ and $\vt\in\VD$. Indeed, for any $v\in\VD$, due to the Lipschitz continuity of the map $v\mapsto{v}^+$, we obtain
\begin{equation*}
 \int_{(\DD)^2}|v^+(\x)-v^+(\xx)|^2\ker(\x,\xx)\d(\x,\xx)
\leq\int_ { (\DD)^2}|v(\x)-v(\xx)|^2\ker(\x,\xx)\d(\x,\xx),
\end{equation*}
i.e., $v^+\in\XD$ and also $v^+\in\VD$. Next, we show that $\vt=0$. From the 
nonlocal Green's identity~\cite{Dunonlocal2012}, we obtain
\begin{equation}
 -\int_{\DD}(\L\psi)\vt\d\x
= 
\intd\intd\left(\psi(\x)-\psi(\xx)\right)\left(\vt(\x)-\vt(\xx)\right)\ker(\x,
\xx)\d\xx\d\x.\label{eq:reg1}
\end{equation}
Subtracting \eqref{eq:var_penalized_vi} and \eqref{eq:reg1}, we obtain
\begin{equation}
a(u_{\varepsilon}-\psi,\vt)=\int_{\DD}\left((-\L\psi-f)^+\mathcal{H}_{
\varepsilon } (u_{\varepsilon}-\psi)+f+\L\psi\right)\vt\d\x.\label{eq:reg3}
\end{equation}
Let $\O_1:=\{\x\in\DD\colon u_{\varepsilon}(\x)\geq\psi(\x)\}$, $\O_2:=\{\x\in\DD\colon u(\x)_{\varepsilon}<\psi(\x)\}$. Then,
\begin{equation}
 \vt(\x)-\vt(\xx)=\begin{cases}
                   0, &\x,\;\xx\in\O_1,\\
                   -(u_{\varepsilon}(\xx)-\psi(\xx)), &\x\in\O_1,\;\xx\in\O_2,\\
u_{\varepsilon}(\x)-\psi(\x), &\x\in\O_2,\;\xx\in\O_1,\\
u_{\varepsilon}(\x)-\psi(\x)-(u_{\varepsilon}(\xx)-\psi(\xx)),&\x,\;\xx\in\O_2.
                  \end{cases}\label{eq:reg2}
\end{equation}
Let $\zz:=u_{\varepsilon}-\psi$; then, $\zz(\x)\geq 0$ on $\O_1$ and $\zz(\x)<0$ on $\O_2$. Taking into account~\eqref{eq:reg2}, we can estimate the left-hand side of~\eqref{eq:reg3} as 
\begin{align*}
 a(\zz,\vt)=&
\intd\intd\left(\zz(\x)-\zz(\xx)\right)\left(\vt(\x)-\vt(\xx)\right)\ker(\x,
\xx)\d\xx\d\x\\
=&\int_{\O_1}\int_{\O_2}(\zz(\x)-\zz(\xx))(-\zz(\xx))\ker(\x,
\xx)\d\xx\d\x\\
&+\int_{\O_2}\int_{\O_1}(\zz(\x)-\zz(\xx))\zz(\x)\ker(\x,
\xx)\d\xx\d\x\\
&+\int_{\O_2}
\int_{\O_2}(\zz(\x)-\zz(\xx))^2\ker(\x,
\xx)\d\xx\d\x\geq 0.
\end{align*}
On the other hand, taking into account the fact that $\vt(\x)=0$ on $\O_1$ and $\mathcal{H}_{\varepsilon}(\zz)=1$ on $\O_2$, the right-hand side of~\eqref{eq:reg3} can be estimated as
\begin{multline*}
 \int_{\DD}\left((-\L\psi-f)^+\mathcal{H}_{
\varepsilon } 
(\zz)+f+\L\psi\right)\vt\d\x\\
=\int_{\O_2}
\left((-\L\psi-f)^++f+\L\psi\right)\vt\d\x\leq 0.
\end{multline*}
Hence, $\vt(\x)=\vt(\xx)=0$ and $u_{\varepsilon}(\x)\geq\psi(\x)$ a.e. on $\DD$.

By Lemma~\ref{lemma:penalized}, $u_{\varepsilon}$ is bounded in $\VD$; then, by the Banach-Alaoglu theorem, there exists a subsequence $u_{\varepsilon^\prime}$ that converges weakly in $\VD$, i.e., $u_{\varepsilon^\prime}\rightharpoonup\widetilde{u}$, $\widetilde{u}\in\VD$. Because $u_{\varepsilon}\in\mathcal{K}$ and $\mathcal{K}$ is closed and convex, we obtain that $\widetilde{u}\in\mathcal{K}$. Next, we show that $\widetilde{u}$ solves~\eqref{eq:vi}. From Lemma~\ref{lemma:penalized}, we know that 
\[\dual{\LLe{\varepsilon^\prime}u_{\varepsilon^\prime},\nu}=\int_{\D}f\nu\d\x, 
\quad\forall\nu\in\VD,\]
where $\varepsilon^{\prime}>0$ and the operator $\LLe{\varepsilon^\prime}$ is defined in~\eqref{nonlinear_oper}. Let $v\in\mathcal{K}$, and suppose {$v\geq\psi+\varepsilon_0d$, where  $d(\x)={\rm dist}(\x,\DI)$, and $\varepsilon_0>0$}. Then, by taking $\nu=u_{\varepsilon^\prime}-v$, and applying Lemma~\ref{lemma:minty}, we obtain 
\begin{equation}
 \dual{\LLe{\varepsilon^\prime} 
v,v-u_{\varepsilon^\prime}}-\int_{\DD}f(v-u_{\varepsilon^\prime})\d\x\geq 0, 
\quad\mbox{for all  } 
v\in\mathcal{K},\; {v\geq\psi+\varepsilon_0d}.\label{reg4}
\end{equation}
{For $\varepsilon<\varepsilon_0d(\x)$, $\mathcal{H}_{\varepsilon}(v(\x)-\psi(\x))=0$; then, by letting $\varepsilon\to 0$, we obtain that $\mathcal{H}_{\varepsilon}(v-\psi)\to 0$ pointwise. Thus,
\begin{equation*}
\int_{\D}(-\L\psi-f)^+\mathcal{H}_{\varepsilon}(v-\psi)(v-u_{\varepsilon})\d\x\to 0,\quad\mbox{as }\varepsilon\to 0,
\end{equation*}
is obtained by additionally using $\mathcal{H}_{\varepsilon} \leq 1$, the uniform boundedness of $u_{\varepsilon}$ in $L^2(\D)$, and~\eqref{eq:f_psi}.} 
Then,~\eqref{reg4} becomes
\begin{equation}
 a(v,v-\widetilde{u})-\int_{\DD}f(v-\widetilde{u})\d\x\geq 0,\quad\mbox{for all 
 } 
v\in\mathcal{K},\; v\geq{\psi+\varepsilon_0d}.\label{reg55}
\end{equation}
{In order to show~\eqref{reg55} for arbitrary $v\in\mathcal{K}$, we define $v_{\varepsilon_0}:=\max(\psi+\varepsilon_0d,v)\geq\psi+\varepsilon_0d$. We note that, since $d(\x)$ is Lipschitz continuous, and $d(\x)=0$ on $\DI$, we have that $v_{\varepsilon_0}\in\V$. In addition, we can verify that $v_{\varepsilon_0}\to v$ strongly in $\V$ as $\varepsilon_0\to 0$. Then, letting $\varepsilon_0\to 0$ and applying Lemma~\ref{lemma:minty} again we arrive at}
\begin{equation*}
 a(\widetilde{u},v-\widetilde{u})-\int_{\DD}f(v-\widetilde{u})\d\x\geq 
0,\quad\mbox{for all 
 } 
v\in\mathcal{K}.
\end{equation*}
Thus, $\widetilde{u}=u$ is the unique solution of~\eqref{eq:vi}, hence, 
$u_{\varepsilon}\rightharpoonup u$ in $\VD$. 

For $\varepsilon>0$ define $\lambda_{\varepsilon}\in\VDd$ 
as $\lambda_{\varepsilon}:=Au_{\varepsilon}-\LL u_{\varepsilon}=Au_{\varepsilon}-f$, where 
$A:\VD\to\VDd$ and is defined in~\eqref{eq:operator_A}. Now letting $\varepsilon\to 
0$, we know that $u_{\varepsilon}\rightharpoonup u$ in $\V$, then 
$\lambda_{\varepsilon}\rightharpoonup Au-f$ weakly in $\VDd$. Now, denoting $\lambda=Au-f$, where 
$\lambda$ is a solution of~\eqref{eq:sp}, we obtain that $\lambda_{\varepsilon}\rightharpoonup\lambda$ in $\VDd$. On the other hand, we know that
\[
\dual{\lambda_{\varepsilon},v}=\int_{\D}\left(\maxx{-\L\psi-f}\mathcal{H}_{
\varepsilon}(u_{\varepsilon}-\psi)\right)v\d\x\quad\forall v\in\VD,
\]
{and hence $\lambda_{\varepsilon}$ can be identified with an element in $L^2(\D)$. Furthermore, for some constant $C>0$, independent of $\varepsilon$, $\norm{\lambda_{\varepsilon}}_{L^2(\D)}\leq C$. In fact, the following holds pointwise almost everywhere in $L^2(\D)$:
\begin{equation}\label{eq:dual_stimate}
\lambda_{\varepsilon}=\maxx{-\L\psi-f}\mathcal{H}_{
\varepsilon}(u_{\varepsilon}-\psi)\leq \maxx{-\L\psi-f}. 
\end{equation}
Hence, there exists a subsequence $\lambda_{\varepsilon^\prime}\rightharpoonup\lambda$ in $L^2(\D)$. Because $(u,\lambda)\in\VD\times\VDd$ is the unique solution pair of~\eqref{eq:sp}, it follows that $\lambda_{\varepsilon}\rightharpoonup\lambda$ in $L^2(\D)$ and the weak limits in $\VDd$ and $L^2(\D)$ coincide, thus  $\lambda\in L^2(\D)$, and, additionally, the pointwise relation~\eqref{eq:dual_stimate} is preserved in the limit. This concludes the proof}.
\end{proof}

\begin{corollary}[Improved regularity of the 
solution]\label{corollary:reg_vi_primal}
Let $\D$ be a domain with $C^{\infty}$ boundary $\partial\D$, and let $f$ and $\psi$ be such that the condition~\eqref{eq:f_psi} holds, and with the kernel $\ker$ defined in~\eqref{kernel_case1} for $\delta>0$ and $\sigma(\x,\xx)=\sigma$. Then, for the solution pair $(u,\lambda)\in\VD\times M$ of the variational inequality~\eqref{eq:sp} we obtain that $\L u\in L^2(\D)$, $\lambda\in L^2(\D)$ and $u\in H_\D^{s+\alpha}(\R^n)$, where $\alpha=\min\{s,1/2-\varepsilon\}$ for any $\varepsilon>0$.
\end{corollary}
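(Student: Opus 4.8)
The plan is to combine Theorem~\ref{thm:regularity_lagrange} with the linear regularity result Theorem~\ref{thm:regularity_linear}. By Theorem~\ref{thm:regularity_lagrange}, applied to the Case~1 kernel with $\sigma(\x,\xx)=\sigma$ (which certainly satisfies~\eqref{kernel_prop1} with the appropriate positive constants), we already know that $\L u\in L^2(\D)$, that $\lambda\in L^2(\D)$, and that $0\le\lambda\le\maxx{-\L\psi-f}$ pointwise a.e. So the only new content is the Sobolev regularity $u\in H_\D^{s+\alpha}(\R^n)$ with $\alpha=\min\{s,1/2-\varepsilon\}$.

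First I would rewrite the first equation of the saddle-point system~\eqref{eq:sp}, namely $a(u,v)-b(\lambda,v)=\dual{f,v}$ for all $v\in\VD$, as the statement that $u\in\VD$ solves the \emph{linear} nonlocal problem~\eqref{nonl_linear_var} with right-hand side $\widehat f:=f+\lambda$. Since $b(\lambda,v)=\dual{\lambda,v}=\int_\D\lambda v\,\d\x$ (using that $\lambda\in L^2(\D)$ has been identified with an $L^2$ function, and $v=0$ on $\DI$), and since $f\in L^2(\D)$ by~\eqref{eq:f_psi}, we get $\widehat f\in L^2(\D)=H^0(\D)$, with $\norm{\widehat f}_{L^2(\D)}\le\norm{f}_{L^2(\D)}+\norm{\lambda}_{L^2(\D)}$ controlled via the dual estimate $\lambda\le\maxx{-\L\psi-f}$ and~\eqref{eq:f_psi}. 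Then I would apply Theorem~\ref{thm:regularity_linear} with $r=0$ to the solution $u$ of this linear problem: it yields $u\in H_\D^{s+\alpha}(\R^n)$ with $\alpha=\min\{s+0,1/2-\varepsilon\}=\min\{s,1/2-\varepsilon\}$, together with the estimate $|u|_{H_\D^{s+\alpha}(\R^n)}\le C\norm{\widehat f}_{L^2(\D)}\le C(\norm{f}_{L^2(\D)}+\norm{\maxx{-\L\psi-f}}_{L^2(\D)})$. This is exactly the claimed regularity.

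There is essentially no obstacle here beyond bookkeeping: the work was already done in Theorem~\ref{thm:regularity_lagrange} (which furnishes $\lambda\in L^2$ and the pointwise dual bound) and in Theorem~\ref{thm:regularity_linear} (which furnishes the linear regularity). The only point requiring a small amount of care is the identification $b(\lambda,v)=\int_\D\lambda v\,\d\x$: one must invoke the fact, established in the proof of Theorem~\ref{thm:regularity_lagrange}, that the weak limit $\lambda$ of the penalized multipliers $\lambda_\varepsilon$ lives in $L^2(\D)$ and coincides with the $\VDd$-limit, so that the extended $L^2_\D(\DD)$ duality pairing $b(\cdot,\cdot)$ reduces to an honest $L^2$ integral over $\D$. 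One should also note that Theorem~\ref{thm:regularity_linear} is stated for the solution of~\eqref{nonl_linear_var}, so we are implicitly using the uniqueness (Lax--Milgram) for that linear problem to conclude that \emph{our} $u$ is \emph{the} solution with data $\widehat f$; this is immediate. I would then simply cite Theorem~\ref{thm:regularity_lagrange} for the statements $\L u\in L^2(\D)$ and $\lambda\in L^2(\D)$, and close with the one-line bootstrap-at-$r=0$ argument above.
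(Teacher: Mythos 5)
Your proposal is correct and follows essentially the same route as the paper: invoke Theorem~\ref{thm:regularity_lagrange} to get $\lambda\in L^2(\D)$, then view $u$ as the solution of the linear problem~\eqref{nonl_linear_var} with right-hand side $f+\lambda\in L^2(\D)$ and apply the regularity estimate~\eqref{nonl_regularity} with $r=0$. The extra bookkeeping you supply (identifying $b(\lambda,\cdot)$ with the $L^2$ pairing and citing uniqueness for the linear problem) is consistent with, and only elaborates on, the paper's argument.
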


\begin{proof}
From the Theorem~\ref{thm:regularity_lagrange} we obtain that $\lambda\in L^2(\D)$, and then we apply the regularity results~\eqref{nonl_regularity} for $u$ as a solution of a linear problem with the right hand-side $\lambda+f\in L^2(\D)$.
\end{proof}

\subsection{Discretization of the nonlocal variational 
inequality}\label{subsec:disc_vi}
We subdivide $\D\cup\DI$ by a triangulation $\{\mathcal{T}_\N\}_\N$ in the same manner as described in Section~\ref{subsec:disc_linear}. Similarly, the primal space $V$ is approximated by $\V_\N$ defined as in~\eqref{eq:V_discrete}. 

For the discrete Lagrange multiplier space $W_\N$, we use discontinuous piecewise linear
biorthogonal basis functions, defined with respect to the same mesh as the basis functions of $V_\N$; see~\cite{Woh00a}. Following the same notation as in Section~\ref{subsec:disc_linear}, we define $W_\N=\spann\{\xi_q,\; q\in\mathcal{J}_\N^m\}$, where $\xi_p$, $p\in\mathcal{J}_\N^l$, satisfy a local biorthogonality relation
\begin{equation*}
 \int_{K}\xi_q\phi_p=\delta_{p,q}\int_{K}\phi_{p}\geq 0.
\end{equation*}
Then, the discrete Lagrange multiplier cone $M_\N\subset W_\N$ is defined as 
\begin{equation}
 M_{\N}=\spann_+\{\chi_q\}_{q\in\mathcal{J}_\N^m}:=\Big\{\eta\in W_\N\colon
\eta=\sum_{q\in\mathcal{J}_\N^m} {\alpha}_q\chi_q ,
\; \alpha_q\geq 0\Big\}.
\end{equation}
For these settings, we can guarantee the inf-sup stability of $b(\cdot,\cdot)$ on the pair of discrete spaces $W_\N\times V_\N$. We remark that $\xi_q\not\in M$, i.e., $M_\N\not\subset M$, so that we obtain a non-conforming approximation with respect to the Lagrange multiplier.

The approximation of~\eqref{eq:sp} by the discrete saddle-point problem reads as follows: find $u_\N\in V_\N$ and $\lambda_\N\in M_\N$ such that
\begin{subequations}\begin{align}
a(u_\N,v_\N)-b(\lambda_\N,v_\N)&=\dual{f,v_\N} \quad&&\forall v_\N\in V_\N,\\
b(\eta_\N-\lambda_\N,u_\N)&\geq b(\eta_\N-\lambda_\N,\psi) 
\quad&&\forall\eta_\N\in 
M_\N.
\end{align}\label{eq:spy-disc}\end{subequations}
Because the discrete inf-sup stability of $b(\cdot,\cdot)$ holds on $W_\N\times\V_\N$, we also 
have the existence and uniqueness of the solution of the discrete problem~\eqref{eq:spy-disc}.
The problem~\eqref{eq:spy-disc} can be solved by a semi-smooth Newton method, and, in particular, we can employ the primal-dual active set strategy~\cite{kunisch} that is known to be locally superlinearly convergent.


\section{Numerical results}\label{sec:numerics}
In this section, we present numerical results for the nonlocal variational equality and inequality problems that illustrate our theoretical findings and provide an outlook on  open questions and possible future directions.

\subsection{Model settings}
We consider one-dimensional problems, where the the computational domain $\D$ is set to $\D=(0,1)$ and is discretized with the uniform mesh of a mesh size $h=1/N$, $N\in\mathbb{N}$. Unless otherwise stated, we set $h=2^{-9}$. The interaction domain is then given by $\DI=(-\delta,0)\cup(1,1+\delta)$.

Due to the nonlocality of the problems, the corresponding matrices, in general, are not sparse (in contrast to the finite element matrices for a local problem). This also increases the computational cost of assembling the matrices, and eventually of solving the discrete problems. {In certain cases, e.g., when $\D$ is a $n-$dimensional hyper-rectangle and the kernel is translation invariant, the matrix possesses a multilevel Toeplitz structure for regular grids. In this case, the assembly time can be significantly reduced by only computing the first row of the matrix; see~\cite{vollmann2017}.}

\subsubsection*{Comparison to the local variational inequality}

First, we consider solutions of the variational inequality~\eqref{eq:spy-disc} with $\ker$ defined as in {\it Case 1} with $\sigma=(2-2s)/\delta^{2-2s}$ and an obstacle functional $\psi$ defined as
\begin{equation}\label{eq:psi_smooth}
 \psi=\max(-3(x-0.5)^2+0.25,0).
\end{equation}
Snapshots of solutions for different values of $\delta>0$ are presented in Figure \ref{fig:sol_delta}. We also plot the solution of the local obstacle problem, i.e., for $-\L=-\Delta$. As we can see, as $\delta\to 0$, the nonlocal solutions $u_h$ and $\lambda_h$ converge to the solution of the local variational inequality, as is expected. {We also note that $\psi\in H^{2s}(\D)$, $s\in(0,1)$ and it can be easily verified that the condition~\eqref{eq:f_psi} is fulfilled. Hence, $\lambda\in L^2(\D)$ according to Theorem~\ref{thm:regularity_lagrange}, which is also confirmed by the numerical examples.}

In addition, we can make another interesting observation. It is well known that for local obstacle problems, the Lagrange multiplier has jumps, as is also clearly seen from Figure~\ref{fig:sol_delta}. In contrast, for the nonlocal problem, we observe some ``smoothing'' effect with respect to increasing the interaction radius $\delta$. 

In Figure~\ref{fig:sol_delta_s}, solutions of~\eqref{eq:spy-disc} for different values of $s\in(0,1)$ with $\delta=2$ and $\ker$ defined as in {\it Case 1} with $\sigma=1/2\delta^2$. As is expected, for $s\to 1$, the solution of the nonlocal variational inequality converges to the solution of the local problem. Similarly, as in the previous example, we also observe a ``smoothing'' effect in the Lagrange multipliers for decreasing $s$. Our theory provides $L^2(\D)$ regularity of the Lagrange multipliers; however from both examples, the numerical results suggest a possible higher regularity of the nonlocal Lagrange multipliers in contrast to the local ones. This question is left for future investigation. 

\begin{figure}[ht]
\centering
\includegraphics[width=0.48\textwidth, height=4cm]{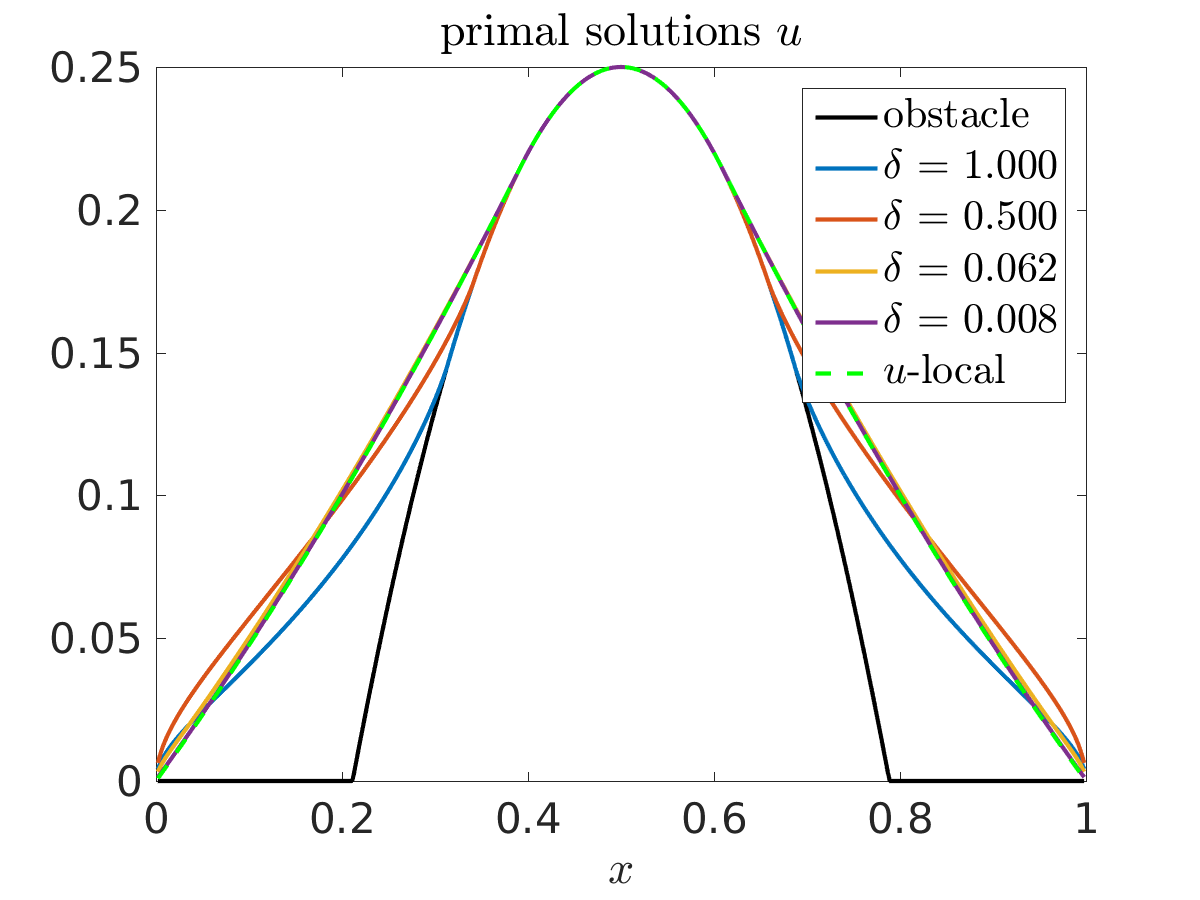}
\includegraphics[width=0.48\textwidth, height=4cm]{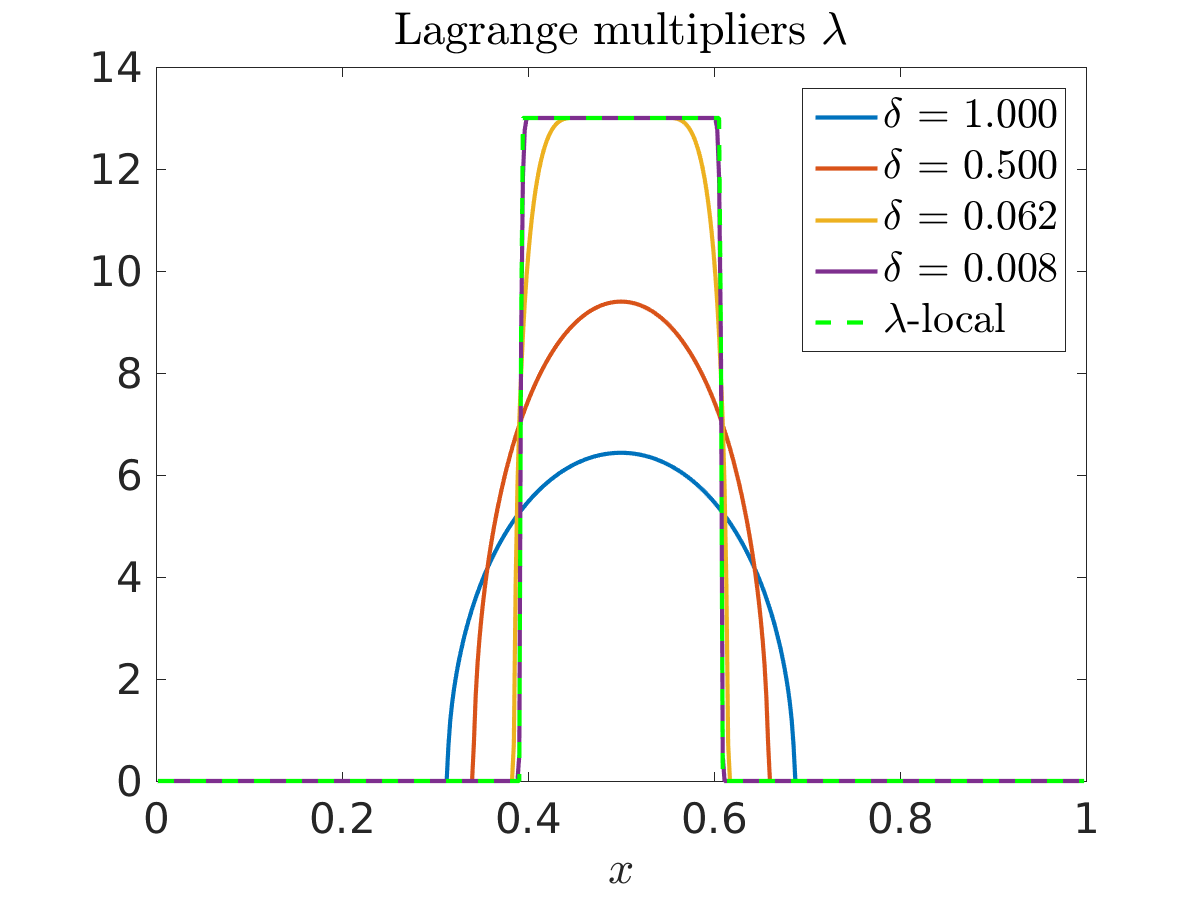}
\caption{Solutions $(u_\N,\lambda_\N)$ for different interaction radii $\delta$ for $s=0.5$, $\sigma=(1-s)/\delta^{2-2s}$, $f=-1,$ and $\psi$ defined in~\eqref{eq:psi_smooth} and also for the corresponding local problem.}\label{fig:sol_delta}
\end{figure}
\begin{figure}[ht]
\centering
\includegraphics[width=0.48\textwidth, height=4cm]
{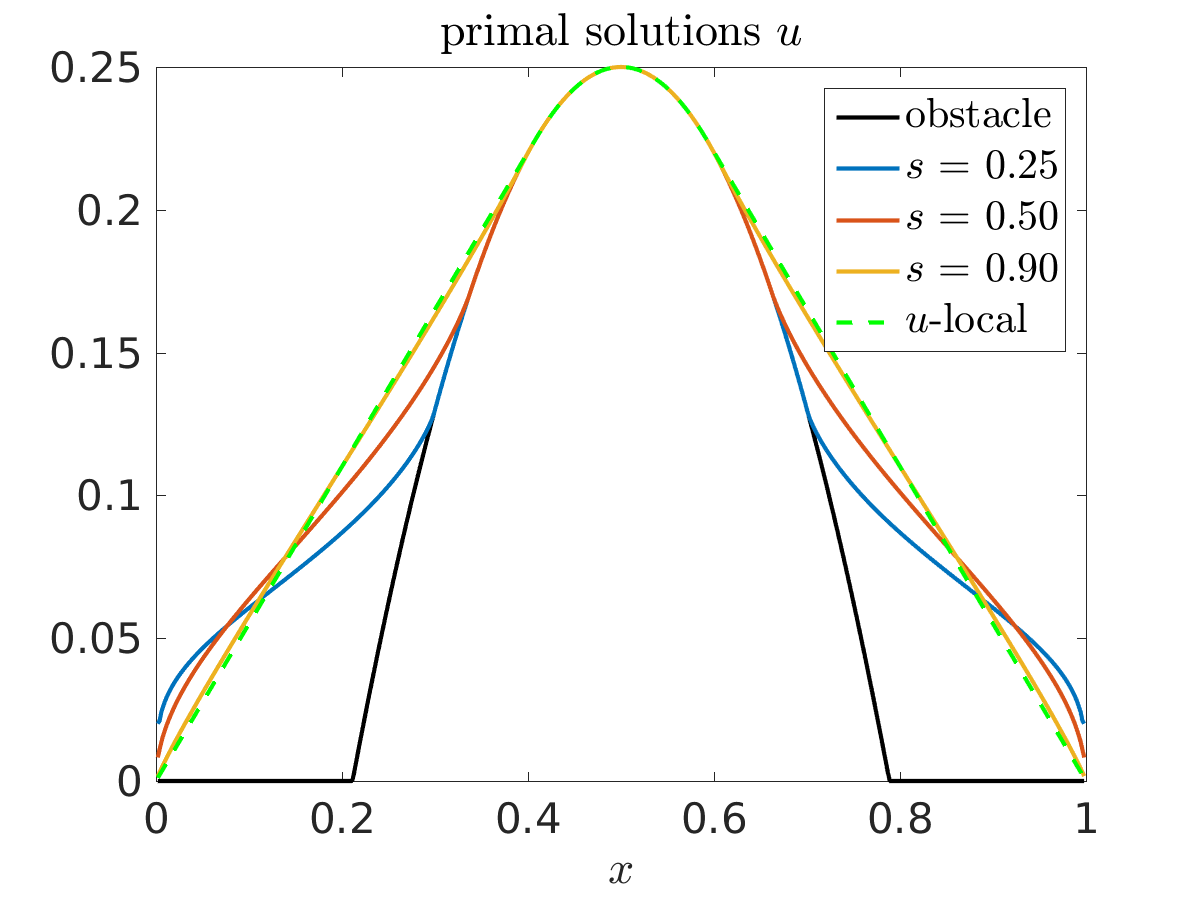}
\includegraphics[width=0.48\textwidth, height=4cm]
{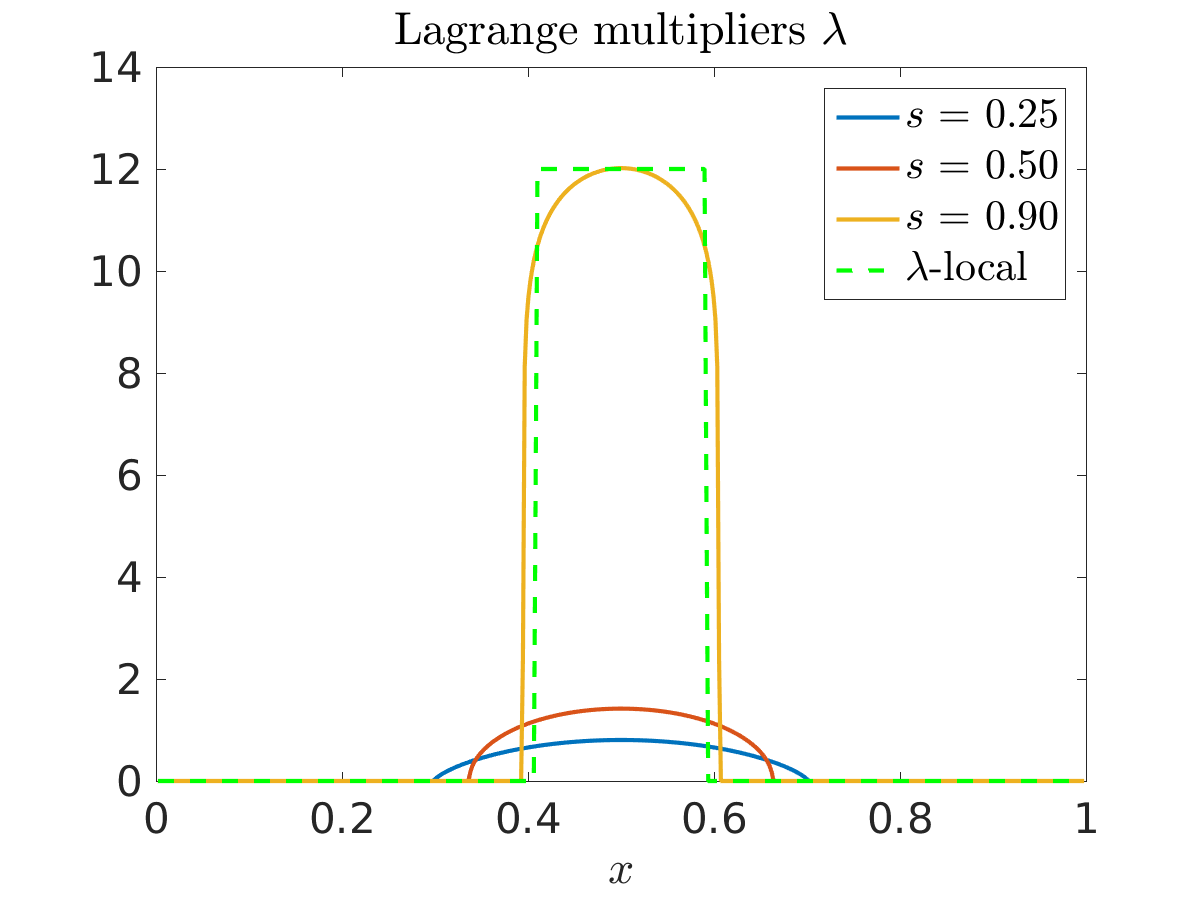}
\caption{Solutions $(u_\N,\lambda_\N)$ for different values of $s\in(0,1)$ and $f=0$, $\sigma=1/2\delta^2$, $\delta=2$, and $\psi$ defined in~\eqref{eq:psi_smooth} and also for the corresponding local problem.}\label{fig:sol_delta_s}
\end{figure}

Next, we consider a less regular obstacle functional
\begin{equation}\label{eq:psi_nonsmooth}
 \psi=
\begin{cases}
 0.02, &\mbox{ for } x\in[{1}/{6},{2}/{6}],\\
 0.24(x-{2}/{3}), &\mbox{ for } x\in[ { 2 }/{ 3 }
,{3}/{4} ],\\
0.24({5}/{6}-x),&\mbox{ for } x\in[ {3}/{4} , {5}/{6} ].
\end{cases}
\end{equation}
For $\psi$ defined in~\eqref{eq:psi_nonsmooth}, $\ker$ defined as in {\it Case 1} with $\sigma=1/2\delta^2$, and $\delta=2$, we plot the solution for different values of $s\in(0,1)$ in Figure~\ref{fig:sol_delta_s_nonsmooth}. In the same figure, we also plot the solution of the local problem.
\begin{figure}[ht!]
\centering
\includegraphics[width=0.48\textwidth, height=4cm]
{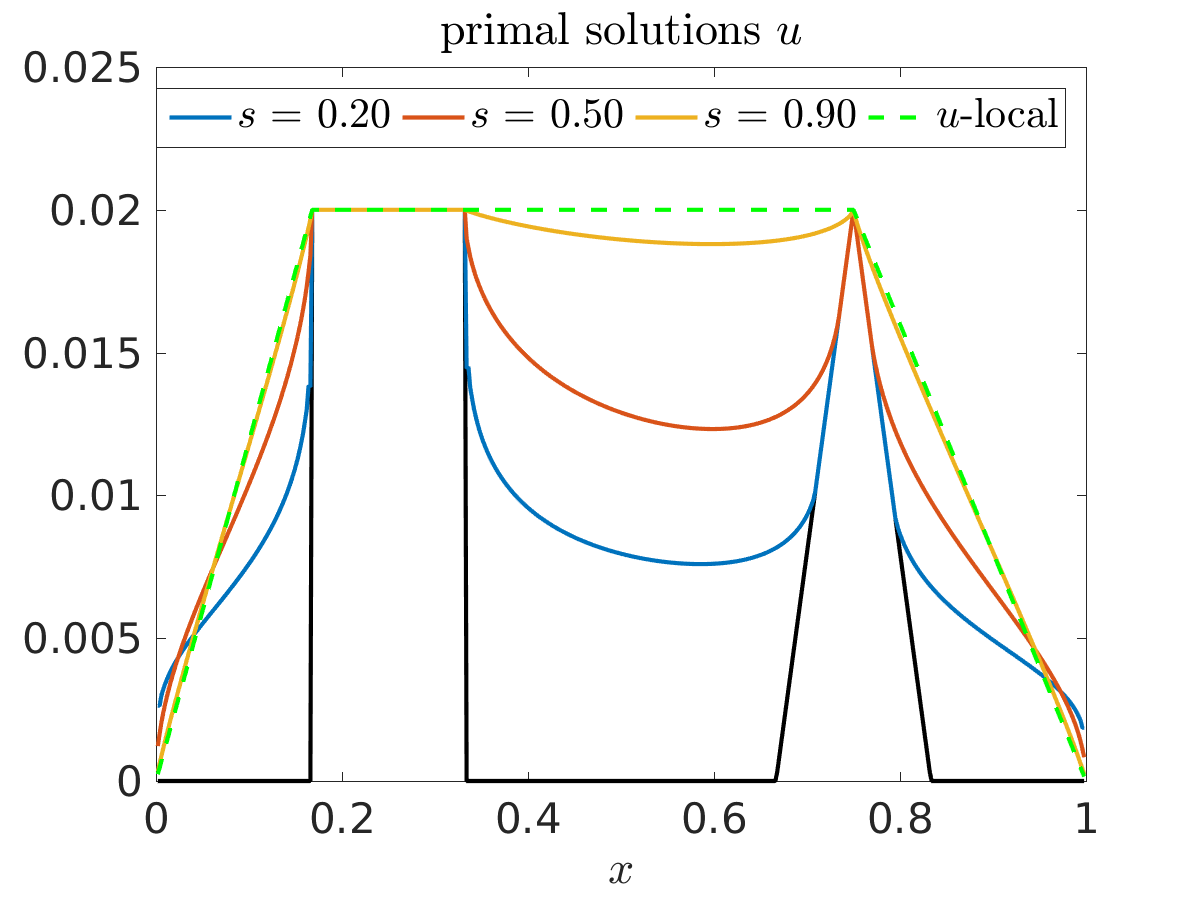}
\includegraphics[width=0.48\textwidth, height=4cm]
{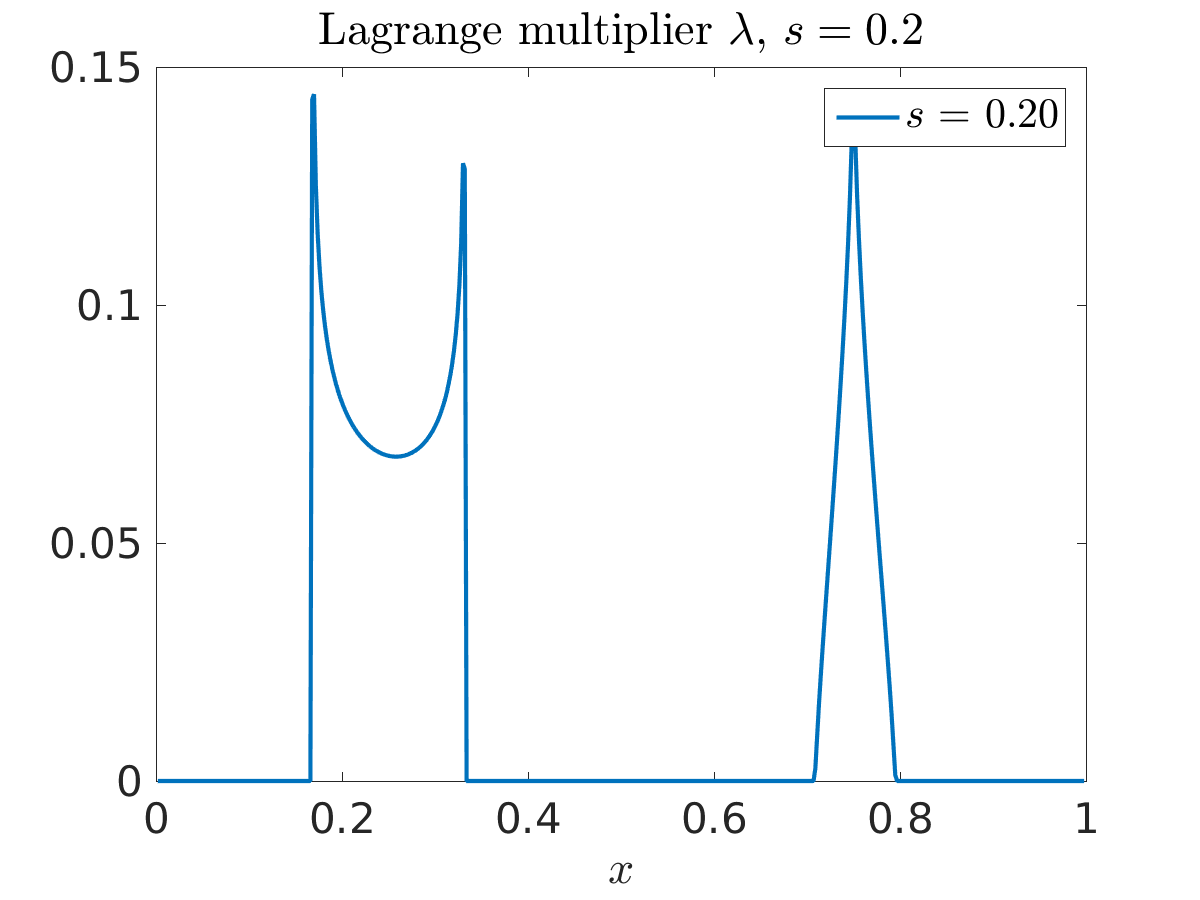}\\
\includegraphics[width=0.48\textwidth, height=4cm]
{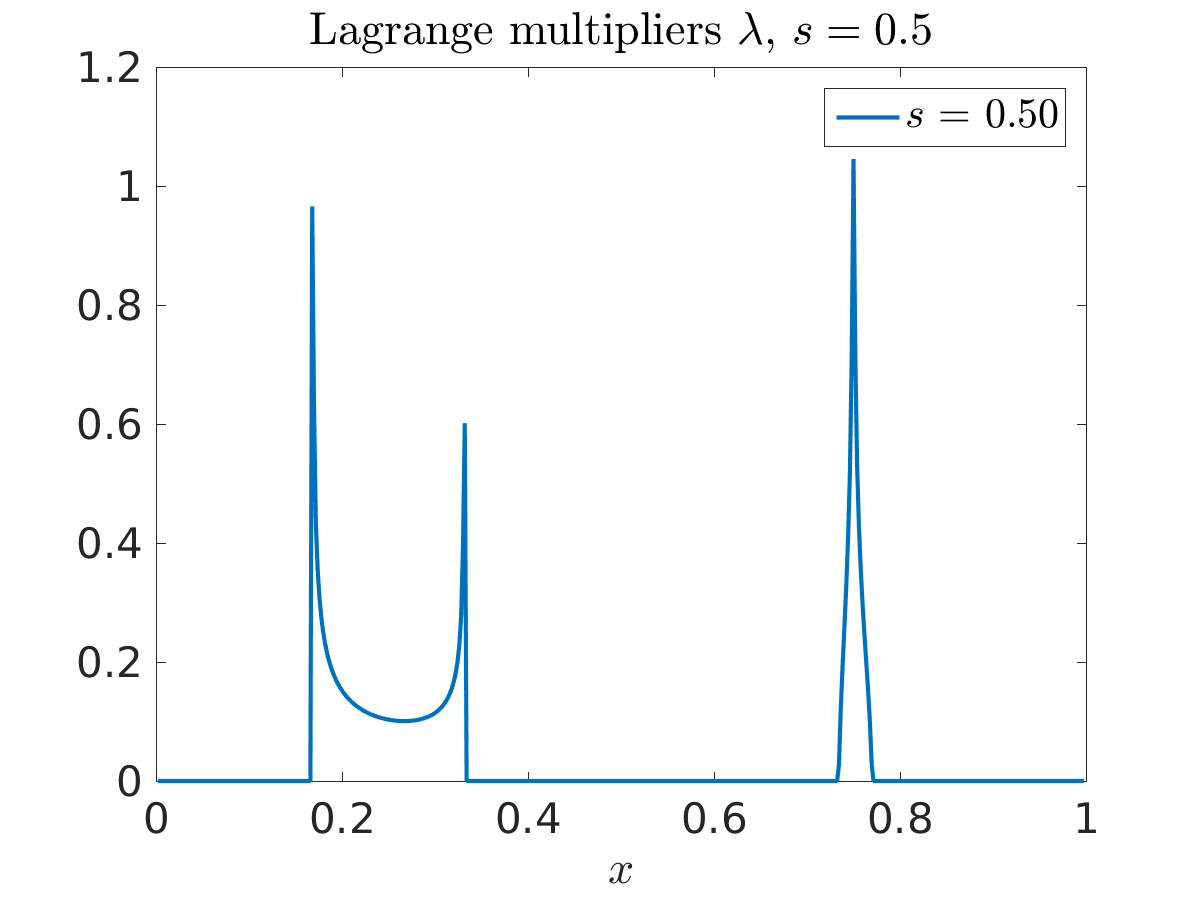}
\includegraphics[width=0.48\textwidth, height=4cm]
{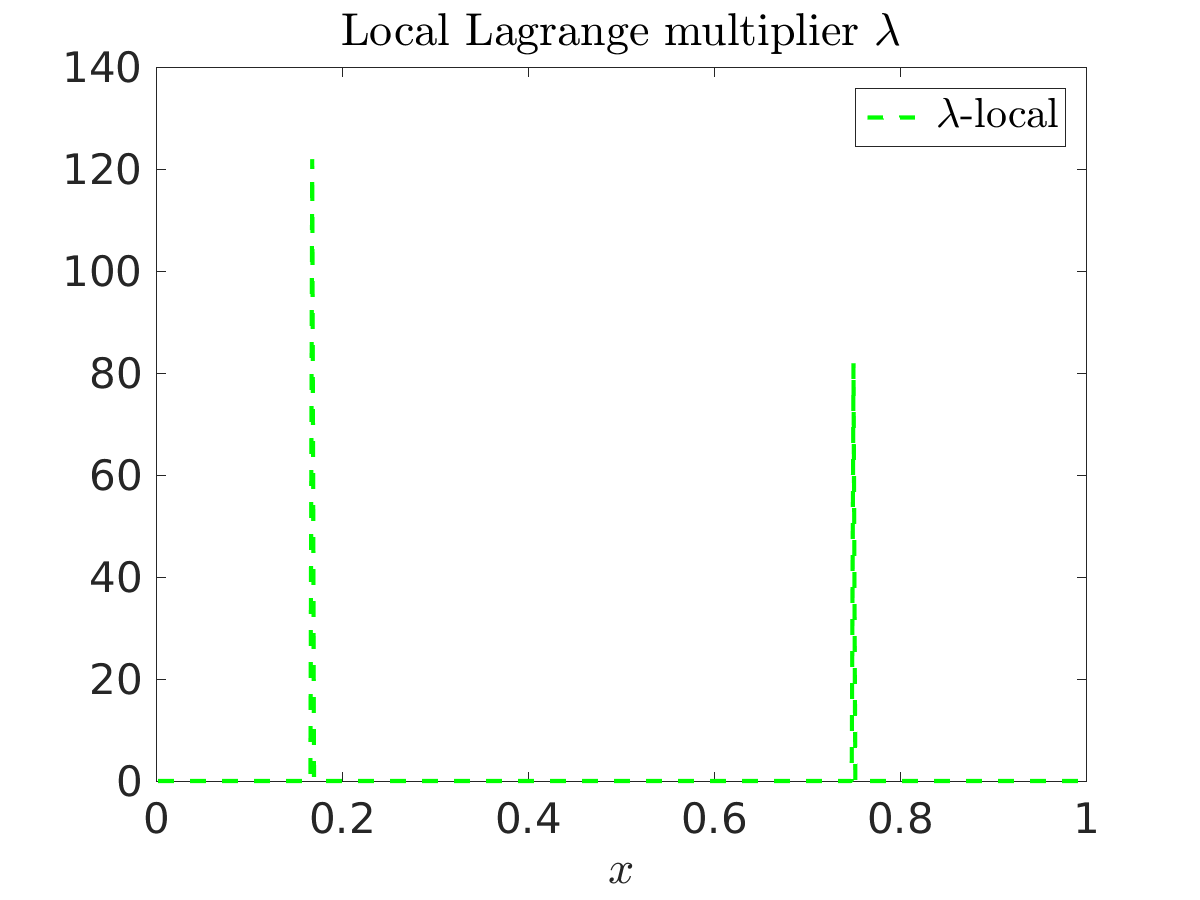}
\caption{The solution $(u_\N,\lambda_\N)$ for 
different values of $s\in(0,1)$ and $f=0$, $\sigma=1/2\delta^2$, $\delta=2$, 
and $\psi$ defined in~\eqref{eq:psi_nonsmooth}.} 
\label{fig:sol_delta_s_nonsmooth}
\end{figure}

{In contrast to the previous example, in this case $\psi\not\in H^{1}$, i.e., $\psi\not\in H^{2s}(\D)$ for all $s\in(0,1)$. While $\psi|_{[1/2,1]}$ belongs to $H^1(\D)$, the remaining part $\psi|_{[0,1/2)}$ is only in $H^{1/2-\varepsilon}(\D)$. Correspondingly, the Lagrange multiplier of the local problem, which is a Dirac delta function, is not in $L^2(\D)$. For the nonlocal problem, the situation is slightly different. Namely, for $s<1/4-\varepsilon$, $\varepsilon>0$, $\psi\in H^{2s}(\D)$, and it can be verified that the condition~\eqref{eq:f_psi} holds, and thus the Lagrange multiplier of the nonlocal variational inequality is in $L^2(\D)$. Correspondingly, for the numerical example for $s=0.2$ in Figure \ref{fig:sol_delta_s_nonsmooth}, we observe the $L^2$ regularity of the nonlocal Lagrange multiplier. Moreover, from Corollary~\ref{corollary:reg_vi_primal}, we know that in this case $u\in H^{2s}_\D(\R^n)$, i.e., $u\in H^{0.4}_\D(\R^n)$. In addition, it is known that the nonlocal solution can admit jump discontinuities for $s<1/2$, and in the present case we can also clearly observe a discontinuity in the solution, which is depicted on the right-top plot in Figure~\ref{fig:sol_delta_s_nonsmooth}.

In contrast, for $s=0.5$, the right part of $\psi$, i.e.,  $\psi|_{[1/2,1]}$, still belongs to $H^{2s}(\D)$, however  left part $\psi|_{[0,1/2)}$ does not. Therefore, on the bottom-left picture for $s=0.5$ in Figure~\ref{fig:sol_delta_s_nonsmooth}, we can only say that the right part of the Lagrange multiplier is in $L^2(\D)$. 

\subsubsection*{Comparison to the variational inequality with the fractional Laplacian}
We study the convergence of the solution of the nonlocal variational inequality~\eqref{eq:spy-disc} to the solution of the variational inequality with the fractional Laplace operator~\eqref{eq:sp_fl}. Because the analytic solution of the variational inequality is in general not known, we compare the corresponding finite element surrogates. In Table~\ref{table:delta_convergence}, we report the approximation errors between the nonlocal solution $u^{\delta}_\N$ of~\eqref{eq:spy-disc} with $\ker$ defined in~\eqref{kernel_case1} with $\sigma(\x,\xx)={c_{n,s}/2}$, and the finite element approximation $u_\N^{FL}$ of~\eqref{eq:sp_fl} for different $\delta>0$ and fixed $s=0.5$. We observe the rate one for both, the energy and $L^2$ errors. This illustrates the theoretical findings in Proposition~\ref{prop:LP_nonlocal}, i.e., the error is proportional to $O(\delta^{-2s})$, $s\in(0,1)$. 

\begin{table}[ht!]
 \centering
 \begin{tabular}{lcc|cc}
\toprule
$\delta$ & energy error & rate in $\V$-norm & $L^2$-error& rate in $L^2$-norm\\
\midrule
$2^{3}$& \rr{9.7}{3}&--& \rr{1.6}{3} &-- \\
$2^{4}$& \rr{4.8}{3}&1.01& \rr{8.0}{4} &1.02 \\
$2^{5}$& \rr{2.4}{3}&1.00& \rr{3.9}{4} &1.00 \\
$2^{6}$& \rr{1.2}{3}&1.00& \rr{2.0}{4} &1.00 \\
$2^{7}$& \rr{6.0}{4}&1.00&  \rr{9.8}{4} &1.00 \\
$2^{8}$& \rr{2.9}{4}&1.00& \rr{4.9}{5} &1.00 \\
\bottomrule
\end{tabular}
\caption{Convergence of the energy error $\norm{u^{\delta}_{\N}-{u}_\N^{FL}}_\V$ and $L^2$-error $\norm{u^{\delta}_{\N}-{u}_\N^{FL}}_{L^2(\DD)}$ with respect to the interaction radii $\delta$ and fixed $s=0.5$, and the corresponding convergence rates.}\label{table:delta_convergence}
\end{table}

\subsubsection*{Convergence of the finite element approximation for a linear 
problem}
Now, we study the convergence of the finite element approximation for the linear problem~\eqref{nonl_linear_var} with respect to the mesh size $h$. We consider the error between the finite element solution $u_{\N}$ of~\eqref{nonl_linear_disc} on different grid levels and the surrogate $\bar{u}$, which is also a finite element solution computed on a fine mesh of a grid size $h=2^{-12}$. Here, we consider $\ker$ defined in~\eqref{kernel_case1} with $\sigma=1$. 

In Table~\ref{table:h_lin_conv} we present convergence results for $f=1$, $s=0.5$, and $\delta=0.5$. Because for these settings the solution admits $H^{1-\varepsilon}(\DD)$ regularity, $\varepsilon>0$, we observe an $O(h^{0.5})$ rate of convergence in the energy norm and $O(h)$ in the $L^2$ norm, which illustrates the theoretical results in Proposition~\ref{prop:convergence_rates}.

\begin{table}[ht!]
 \centering
 \begin{tabular}{lcc|cc}
\toprule
$h$ & energy error & rate in $\V$-norm & $L^2$-error& rate in $L^2$-norm\\
\midrule
$2^{-3}$& \rr{9.27}{2}&--& \rr{1.00}{2} &-- \\
$2^{-4}$& \rr{6.59}{2}&0.492& \rr{5.21}{3} &0.950 \\
$2^{-5}$& \rr{4.66}{2}&0.498& \rr{2.67}{3} &0.963 \\
$2^{-6}$& \rr{3.27}{2}&0.513& \rr{1.34}{3} &0.992 \\
$2^{-7}$& \rr{2.24}{2}&0.544& \rr{6.54}{4} &1.041 \\
\bottomrule
 \end{tabular}
\caption{{\it Linear problem}: Convergence of the energy error $\norm{u_{\N}-\bar{u}}_\V$ and $L^2$-error $\norm{u_{\N}-\bar{u}}_{L^2(\DD)}$ with respect to the grid size $h$, and the corresponding convergence rates, with $s=0.5$, $\delta=0.5$, $f=1$.}\label{table:h_lin_conv}
\end{table}

To study the convergence behavior of the finite element approximation with respect to different $s\in(0,1)$, we report the corresponding rates of convergence in Table~\ref{table:h_lin_conv_s}. Here, as previously, the results illustrates the theoretical ones~\eqref{eq:convergence_rates}. 
\begin{table}[ht!]
\centering
\begin{tabular}{c|cccc|cccc}
 \toprule
&\multicolumn{3}{c}{energy 
error}&\multicolumn{5}{c}{$L^2$-error}\\
\backslashbox[1cm][c]{$h$}{$s$}&$0.1$ &$0.25$ &$0.5$&$0.75$ 
&$0.1$ & $0.25$ & $0.5$&$0.75$\\
 \midrule
$2^{-5}$&0.48  &0.49 &0.50 &0.55   &0.68 &0.79 &0.97 &1.15\\
$2^{-6}$&0.49  &0.49 &0.50 &0.53   &0.67 &0.79 &0.97 &1.11\\
$2^{-7}$&0.50  &0.50 &0.51 &0.52   &0.66 &0.79 &0.98 &1.20\\
$2^{-8}$&0.51  &0.51 &0.52 &0.53   &0.66 &0.80 &1.00 &1.11\\
$2^{-9}$&0.54  &0.54 &0.54 &0.55   &0.68 &0.82 &1.04 &1.15\\
\bottomrule
\end{tabular}
\caption{{\it Linear problem}: Rates of convergence of the error in the energy norm $\norm{u_{\N}-\bar{u}}_\V$ and $L^2$-norm $\norm{u_{\N}-\bar{u}}_{L^2(\DD)}$ with respect to the grid size $h$ for different $s$ and fixed $\delta=1$. The reference solution is computed on the fine mesh with $h=2^{-12}$. }\label{table:h_lin_conv_s}
\end{table}

\subsubsection*{Convergence of the finite element approximation for the variational 
inequality}
We also report on the convergence of the finite element method for the nonlocal 
variational inequality~\eqref{eq:sp}. Here, as before, the surrogate solution 
$\bar{u}$, which is a finite element solution, computed 
on a fine mesh of
grid size $h=2^{-12}$, and the kernel $\ker$ defined 
in~\eqref{kernel_case1} with 
$\sigma=1$. For $f=0$, $s=0.5$, $\delta=1$, and an obstacle functional $\psi$ 
defined in~\eqref{eq:psi_smooth}, the convergence of the energy and $L^2$ 
errors together with the corresponding rates are presented in 
Table~\ref{table:h_vi_conv}. For other values of $s\in(0,1)$ we also report 
the convergence rates in Table~\ref{table:h_vi_conv_s}. As 
we can observe, the numerical results suggest the same convergence order for 
$u_\N$ as in the case of the linear problem.
\begin{table}[ht!]
 \centering
 \begin{tabular}{lcc|cc}
\toprule
$h$ & energy error & rate in $\V$-norm & $L^2$-error& rate in $L^2$-norm\\
\midrule
$2^{-5}$& \rr{3.61}{2}&0.556& \rr{1.62}{3} &1.200 \\
$2^{-6}$& \rr{2.54}{2}&0.508& \rr{8.22}{4} &0.985\\
$2^{-7}$& \rr{1.78}{2}&0.508& \rr{4.18}{4} &0.976 \\
$2^{-8}$& \rr{1.24}{2}&0.520& \rr{2.10}{4} &0.987 \\
$2^{-9}$& \rr{8.50}{3}&0.548& \rr{1.02}{4} &1.041 \\
\bottomrule
 \end{tabular}
\caption{{\it Variational inequality}: Convergence of the energy error 
$\norm{u_{\N}-\bar{u}}_\V$ and 
$L^2$-error $\norm{u_{\N}-\bar{u}}_{L^2(\DD)}$ with respect to the grid 
size $h$, and the corresponding convergence rates with $s=0.5$, $\delta=1$, 
$f=0$, $\sigma=1$, and $\psi$ defined in~\eqref{eq:psi_smooth}. The reference 
solution is computed on the fine mesh with $h=2^{-12}$.}\label{table:h_vi_conv}
\end{table}

\begin{table}[ht!]
\centering
\begin{tabular}{c|cccc|cccc}
 \toprule
&\multicolumn{3}{c}{energy 
error}&\multicolumn{5}{c}{$L^2$-error}\\
\backslashbox[1cm][c]{$h$}{$s$}&$0.1$ &$0.25$ &$0.5$&$0.75$ 
&$0.1$ & $0.25$ & $0.5$&$0.75$\\
 \midrule
$2^{-5}$&0.52&    0.49&    0.56&    0.70&    0.69&    0.80&    1.20&   1.40\\
$2^{-6}$&0.50&    0.49&    0.51&    0.58&    0.67&    0.77&    0.98&   1.23\\
$2^{-7}$&0.50&    0.50&    0.51&    0.53&    0.65&    0.77&    0.97&  1.12\\
$2^{-8}$&0.51&    0.51&    0.52&    0.53&    0.65&    0.78&    0.98&  1.15\\
$2^{-9}$&0.54&    0.54&    0.55&    0.55&    0.67&    0.81&    1.04&  1.18\\
\bottomrule
\end{tabular}
\caption{{\it Variational inequality}: Rates of convergence of the error in the energy norm $\norm{u_{\N}-\bar{u}}_\V$ and $L^2$-norm $\norm{u_{\N}-\bar{u}}_{L^2(\DD)}$ with respect to the grid size $h$ for different $s$ and fixed $\delta=1$. The reference solution is computed on the fine mesh with $h=2^{-12}$. 
}\label{table:h_vi_conv_s}
\end{table}

\section{Concluding remarks}
In this paper, we provide regularity results for nonlocal linear and obstacle problems. For both cases, we have extended the regularity results~\cite{grubb2015} to the truncated fractional Laplace kernels. For a general class of kernels, we derive an improved regularity estimate for the Lagrange multiplier. We present the discretization of both problems by a finite element method and prove a priori error estimates for the linear problem. The development of the a priori error estimates for nonlocal obstacle problem is beyond the scope of this paper and is a subject for future investigation.


\bibliographystyle{amsplain}
\bibliography{references}

\providecommand{\bysame}{\leavevmode\hbox to3em{\hrulefill}\thinspace}
\providecommand{\MR}{\relax\ifhmode\unskip\space\fi MR }
\providecommand{\MRhref}[2]{%
  \href{http://www.ams.org/mathscinet-getitem?mr=#1}{#2}
}
\providecommand{\href}[2]{#2}
\begin{thebibliography}{10}

\bibitem{acosta2017}
G.~Acosta and J.P. Borthagaray, \emph{A fractional {L}aplace equation:
  regularity of solutions and finite element approximations}, SIAM J. Numer.
  Anal. \textbf{55} (2017), no.~2, 472--495. \MR{3620141}

\bibitem{bensoussan}
A.~Bensoussan and J.-L. Lions, \emph{Applications of variational inequalities
  in stochastic control}, Studies in Mathematics and its Applications, vol.~12,
  North-Holland Publishing Co., Amsterdam-New York, 1982, Translated from the
  French. \MR{653144 (83e:49012)}

\bibitem{bonito2017}
A.~{Bonito}, J.~P. {Borthagaray}, R.~H. {Nochetto}, E.~{Otarola}, and A.~J.
  {Salgado}, \emph{{Numerical Methods for Fractional Diffusion}}, ArXiv
  e-prints (2017).

\bibitem{brennerscott}
S.~C. Brenner and L.~R. Scott, \emph{The mathematical theory of finite element
  methods}, third ed., Texts in Applied Mathematics, vol.~15, Springer, New
  York, 2008. \MR{2373954}

\bibitem{buades2010}
A.~Buades, B.~Coll, and J.~M. Morel, \emph{Image denoising methods. {A} new
  nonlocal principle}, SIAM Rev. \textbf{52} (2010), no.~1, 113--147, Reprint
  of ``A review of image denoising algorithms, with a new one'' [MR2162865].
  \MR{2608636}

\bibitem{caffarelli2017}
L.~Caffarelli, X.~Ros-Oton, and J.~Serra, \emph{Obstacle problems for
  integro-differential operators: regularity of solutions and free boundaries},
  Invent. Math. \textbf{208} (2017), no.~3, 1155--1211. \MR{3648978}

\bibitem{caffarelli2007}
L.~Caffarelli and L.~Silvestre, \emph{An extension problem related to the
  fractional {L}aplacian}, Comm. Partial Differential Equations \textbf{32}
  (2007), no.~7-9, 1245--1260. \MR{2354493}

\bibitem{ciarlet}
P.~G. Ciarlet, \emph{The finite element method for elliptic problems},
  North-Holland Publishing Co., Amsterdam-New York-Oxford, 1978, Studies in
  Mathematics and its Applications, Vol. 4. \MR{0520174}

\bibitem{cont2004}
R.~Cont and P.~Tankov, \emph{Financial modelling with jump processes}, Chapman
  \& Hall/CRC Financial Mathematics Series, Chapman \& Hall/CRC, Boca Raton,
  FL, 2004. \MR{2042661}

\bibitem{conv_diff2017}
M.~D'Elia, Q.~Du, M.~Gunzburger, and R.~Lehoucq, \emph{Nonlocal
  convection-diffusion problems on bounded domains and finite-range jump
  processes}, Comput. Methods Appl. Math. \textbf{17} (2017), no.~4, 707--722.
  \MR{3709057}

\bibitem{DELIAlaplacian}
M.~D'Elia and M.~Gunzburger, \emph{The fractional {L}aplacian operator on
  bounded domains as a special case of the nonlocal diffusion operator},
  Comput. Math. Appl. \textbf{66} (2013), no.~7, 1245--1260. \MR{3096457}

\bibitem{Dunonlocal2012}
Q.~Du, M.~Gunzburger, R.~B. Lehoucq, and K.~Zhou, \emph{Analysis and
  approximation of nonlocal diffusion problems with volume constraints}, SIAM
  Rev. \textbf{54} (2012), no.~4, 667--696. \MR{3023366}

\bibitem{Dunonlocal2013}
\bysame, \emph{A non-local vector calculus, non-local volume-constrained
  problems, and non-local balance laws}, Mathematical Models and Methods in
  Applied Sciences \textbf{23} (2013), no.~03, 493--540.

\bibitem{gilboa2007}
G.~Gilboa and S.~Osher, \emph{Nonlocal linear image regularization and
  supervised segmentation}, Multiscale Model. Simul. \textbf{6} (2007), no.~2,
  595--630. \MR{2338496}

\bibitem{glowinski}
R.~Glowinski, \emph{Numerical methods for nonlinear variational problems},
  Scientific Computation, Springer, 2008.

\bibitem{grisvard}
P.~Grisvard, \emph{Elliptic problems in nonsmooth domains}, Monographs and
  Studies in Mathematics, vol.~24, Pitman (Advanced Publishing Program),
  Boston, MA, 1985. \MR{775683}

\bibitem{grubb2015}
G.~Grubb, \emph{Fractional {L}aplacians on domains, a development of
  {H}{\"o}rmander's theory of {$\mu$}-transmission pseudodifferential
  operators}, Adv. Math. \textbf{268} (2015), 478--528. \MR{3276603}

\bibitem{guan2017}
Q.~Guan and M.~Gunzburger, \emph{Analysis and approximation of a nonlocal
  obstacle problem}, J. Comput. Appl. Math. \textbf{313} (2017), 102--118.
  \MR{3573229}

\bibitem{kunisch}
M.~Hinterm{\"u}ller, K.~Ito, and K.~Kunisch, \emph{The primal-dual active set
  strategy as a semismooth {N}ewton method}, SIAM Journal on Optimization
  \textbf{13} (2002), no.~3, 865--888.

\bibitem{kikuchi}
N.~Kikuchi and J.~T. Oden, \emph{Contact problems in elasticity: a study of
  variational inequalities and finite element methods}, SIAM Studies in Applied
  Mathematics, vol.~8, Society for Industrial and Applied Mathematics (SIAM),
  Philadelphia, PA, 1988. \MR{961258 (89j:73097)}

\bibitem{kinderlehrer80}
D.~Kinderlehrer and G.~Stampacchia, \emph{An introduction to variational
  inequalities and their applications}, Pure and Applied Mathematics, vol.~88,
  Academic Press, Inc. [Harcourt Brace Jovanovich, Publishers], New
  York-London, 1980. \MR{567696}

\bibitem{lewy_1969}
H.~Lewy and G.~Stampacchia, \emph{On the regularity of the solution of a
  variational inequality}, Comm. Pure Appl. Math. \textbf{22} (1969), 153--188.
  \MR{0247551}

\bibitem{merton1976}
R.~C. Merton, \emph{Option pricing when underlying stock returns are
  discontinuous}, Journal of Financial Economics \textbf{3} (1976), no.~1, 125
  -- 144.

\bibitem{musina2017}
R.~Musina, A.~I. Nazarov, and K.~Sreenadh, \emph{Variational inequalities for
  the fractional {L}aplacian}, Potential Anal. \textbf{46} (2017), no.~3,
  485--498. \MR{3630405}

\bibitem{ros_oton2014}
X.~Ros-Oton and J.~Serra, \emph{The {D}irichlet problem for the fractional
  {L}aplacian: regularity up to the boundary}, J. Math. Pures Appl. (9)
  \textbf{101} (2014), no.~3, 275--302. \MR{3168912}

\bibitem{rosasco2010}
L.~Rosasco, M.~Belkin, and E.~De~Vito, \emph{On learning with integral
  operators}, J. Mach. Learn. Res. \textbf{11} (2010), 905--934. \MR{2600634}

\bibitem{servadei}
R.~Servadei and E.~Valdinoci, \emph{Lewy-{S}tampacchia type estimates for
  variational inequalities driven by (non)local operators}, Rev. Mat. Iberoam.
  \textbf{29} (2013), no.~3, 1091--1126. \MR{3090147}

\bibitem{silling2000}
S.~A. Silling, \emph{Reformulation of elasticity theory for discontinuities and
  long-range forces}, J. Mech. Phys. Solids \textbf{48} (2000), no.~1,
  175--209. \MR{1727557}

\bibitem{silvestre2007}
L.~Silvestre, \emph{Regularity of the obstacle problem for a fractional power
  of the {L}aplace operator}, Comm. Pure Appl. Math. \textbf{60} (2007), no.~1,
  67--112. \MR{2270163}

\bibitem{tian2015}
H.~Tian, L.~Ju, and Q.~Du, \emph{Nonlocal convection-diffusion problems and
  finite element approximations}, Comput. Methods Appl. Mech. Engrg.
  \textbf{289} (2015), 60--78. \MR{3327145}

\bibitem{visik1967}
M.~I. Vi\v{s}ik and G.~I. \`Eskin, \emph{Elliptic convolution equations in a
  bounded region and their applications}, Uspehi Mat. Nauk \textbf{22} (1967),
  no.~1 (133), 15--76. \MR{0214910}

\bibitem{vollmann2017}
C.~{Vollmann} and V.~{Schulz}, \emph{{Exploiting multilevel Toeplitz structures
  in high dimensional nonlocal diffusion}}, ArXiv e-prints (2017).

\bibitem{Woh00a}
B.~I. Wohlmuth, \emph{A mortar finite element method using dual spaces for the
  {L}agrange multiplier}, SIAM J. Numer. Anal. \textbf{38} (2000), no.~3,
  989--1012. \MR{1781212}

\end{thebibliography}
\end{document}